\newtheorem{theorem}{Theorem}
\theoremstyle{plain}
\newtheorem{corollary}{Corollary}
\newtheorem{example}{Example}
\newtheorem{lemma}{Lemma}
\newtheorem{remark}{Remark}
\numberwithin{equation}{section}
\begin{document}
\title[Logarithmic derivative near a singular point and applications]{%
Estimates of the logarithmic derivative near a singular point and applications}
\author{Saada Hamouda}
\address{Laboratory of pure and applied mathematics\\
Department of mathematics, UMAB university\\
Algeria}
\email{saada.hamouda@univ-mosta.dz}
\subjclass[2010]{Primary 34M10; Secondary 30D35}
\keywords{Logarithmic derivative estimates, linear differential equations,
growth of solutions, near a singular point}

\begin{abstract}
In this paper, we will give estimates for the logarithmic derivative $%
\left\vert \frac{f^{\left( k\right) }\left( z\right) }{f\left( z\right) }%
\right\vert $ where $f$ is a meromorphic function in a region of the form $%
D\left( 0,R\right) =\left\{ z\in 
\mathbb{C}
:0<\left\vert z\right\vert <R\right\} .$ Some applications on the growth of
solutions of linear differential equations near a singular point are given.
\end{abstract}

\maketitle

\section{Introduction and statement of results}

Throughout this paper, we assume that the reader is familiar with the
fundamental results and the standard notations of the Nevanlinna value
distribution theory of meromorphic function on the complex plane $%
\mathbb{C}
$ and in the unit disc $D=\left\{ z\in 
\mathbb{C}
:\left\vert z\right\vert <1\right\} \ $(see \cite{haym, yang, lain}). The
importance of this theory has inspired many authors to find modifications
and generalizations to different domains. Extensions of Nevanlinna Theory to
annuli have been made by \cite{bieb, khri, kond, korh, mark}. Recently in 
\cite{fet,hamj}, Hamouda and Fettouch investigated the growth of solutions
of a class of linear differential equations 
\begin{equation}
f^{\left( k\right) }+A_{k-1}\left( z\right) f^{\left( k-1\right)
}+...+A_{1}\left( z\right) f^{\prime }+A_{0}\left( z\right) f=0  \label{eq00}
\end{equation}%
near a singular point where the coefficients $A_{j}\left( z\right) \ \left(
j=0,1,...,k-1\right) $ are meromorphic or analytic in $\overline{%
\mathbb{C}
}-\left\{ z_{0}\right\} $ and for that they gave estimates of the
logarithmic derivative $\left\vert \frac{f^{\left( k\right) }\left( z\right) 
}{f\left( z\right) }\right\vert $ for a meromorphic function $f$ in $%
\overline{%
\mathbb{C}
}-\left\{ z_{0}\right\} ,\ \left( \overline{%
\mathbb{C}
}=%
\mathbb{C}
\cup \left\{ \infty \right\} \right) $. A question was asked in \cite{fet}
as the following : can we get similar estimates of $\left\vert \frac{%
f^{\left( k\right) }\left( z\right) }{f\left( z\right) }\right\vert $ in a
region of the form $D_{z_{0}}\left( 0,R\right) =\left\{ z\in 
\mathbb{C}
:0<\left\vert z-z_{0}\right\vert <R\right\} ?$ Naturally, this allows us to
study the solutions of (\ref{eq00}) with meromorphic coefficients in $%
D_{z_{0}}\left( 0,R\right) .$The same question was asked in \cite{hamj} for
another problem concerning the case when the coefficients of (\ref{eq00})
are analytic in $\overline{%
\mathbb{C}
}-\left\{ z_{0}\right\} ,$ the solutions may be non analytic in $\overline{%
\mathbb{C}
}-\left\{ z_{0}\right\} $. In this paper, we will answer this question and
give some applications. Without lose of generality, we will study the case $%
z_{0}=0$ and for $z_{0}\neq 0$ we may use the change of variable $w=z-z_{0}$.

Throughout this paper, we will use the following notation:%
\begin{equation*}
D\left( R_{1},R_{2}\right) =\left\{ z\in 
\mathbb{C}
:R_{1}<\left\vert z\right\vert <R_{2}\right\} ,\ D\left( R\right) =\left\{
z\in 
\mathbb{C}
:\left\vert z\right\vert <R\right\} .
\end{equation*}%
We recall the appropriate definitions \cite{fet, korh, mark}. Suppose that $%
f\left( z\right) $ is meromorphic in $\overline{%
\mathbb{C}
}-\left\{ 0\right\} $. Define the counting function near $0$ by%
\begin{equation}
N_{0}\left( r,f\right) =-\int\limits_{\infty }^{r}\frac{n\left( t,f\right)
-n\left( \infty ,f\right) }{t}dt-n\left( \infty ,f\right) \log r,  \label{d1}
\end{equation}%
where $n\left( t,f\right) $ counts the number of poles of $f\left( z\right) $
in the region $\left\{ z\in 
\mathbb{C}
:t\leq \left\vert z\right\vert \right\} \cup \left\{ \infty \right\} $ each
pole according to its multiplicity; and the proximity function by 
\begin{equation}
m_{0}\left( r,f\right) =\frac{1}{2\pi }\int\limits_{0}^{2\pi }\ln
^{+}\left\vert f\left( re^{i\varphi }\right) \right\vert d\varphi .
\label{d2}
\end{equation}%
The characteristic function of $f$ is defined by%
\begin{equation}
T_{0}\left( r,f\right) =m_{0}\left( r,f\right) +N_{0}\left( r,f\right) .
\label{d3}
\end{equation}%
For a meromorphic function $f\left( z\right) $ in $D\left( 0,R\right) ,$ we
define the counting function near $0$ by%
\begin{equation}
N_{0}\left( r,R^{\prime },f\right) =\int\limits_{r}^{R^{\prime }}\frac{%
n\left( t,f\right) }{t}dt,  \label{0d1}
\end{equation}%
where $n\left( t,f\right) $ counts the number of poles of $f\left( z\right) $
in the region $\left\{ z\in 
\mathbb{C}
:t\leq \left\vert z\right\vert \leq R^{\prime }\right\} $ $\left(
0<R^{\prime }<R\right) ,$ each pole according to its multiplicity; and the
proximity function near the singular point $0$ by 
\begin{equation}
m_{0}\left( r,f\right) =\frac{1}{2\pi }\int\limits_{0}^{2\pi }\ln
^{+}\left\vert f\left( re^{i\varphi }\right) \right\vert d\varphi .
\label{0d2}
\end{equation}%
The characteristic function of $f$ is defined in the usual manner by%
\begin{equation}
T_{0}\left( r,R^{\prime },f\right) =m_{0}\left( r,f\right) +N_{0}\left(
r,R^{\prime },f\right) .  \label{0d3}
\end{equation}

In addition, the order of growth of a meromorphic function $f\left( z\right) 
$ near $0$ is defined by%
\begin{equation}
\sigma _{T}\left( f,0\right) =\underset{r\rightarrow 0}{\lim \sup }\frac{%
\log ^{+}T_{0}\left( r,R^{\prime },f\right) }{-\log r}.  \label{d4}
\end{equation}%
For an analytic function $f\left( z\right) $ in $D\left( 0,R\right) ,$ we
have also the definition%
\begin{equation}
\sigma _{M}\left( f,0\right) =\underset{r\rightarrow 0}{\lim \sup }\frac{%
\log ^{+}\log ^{+}M_{0}\left( r,f\right) }{-\log r},  \label{d5}
\end{equation}%
where $M_{0}\left( r,f\right) =\max \left\{ \left\vert f\left( z\right)
\right\vert :\left\vert z\right\vert =r\right\} .$

If $f\left( z\right) $ is meromorphic in $D\left( 0,R\right) $ of finite
order $0<\sigma _{T}\left( f,0\right) =\sigma <\infty $, then we can define
the type of $f$ as the following:%
\begin{equation*}
\tau _{T}\left( f,0\right) =\underset{r\rightarrow 0}{\lim \sup \ }r^{\sigma
}T_{0}\left( r,R^{\prime },f\right) .
\end{equation*}%
If $f\left( z\right) $ is analytic in $D\left( 0,R\right) $ of finite order $%
0<\sigma _{M}\left( f,0\right) =\sigma <\infty $, we have also another
definition of the type of $f$ as the following:%
\begin{equation}
\tau _{M}\left( f,0\right) =\underset{r\rightarrow 0}{\lim \sup \ }r^{\sigma
}\log ^{+}M_{0}\left( r,f\right) .  \label{d5b}
\end{equation}%
By the usual manner, we define the iterated order near $0$ as follows:

\begin{equation}
\sigma _{n,T}\left( f,0\right) =\underset{r\rightarrow 0}{\lim \sup }\frac{%
\log _{n}^{+}T_{0}\left( r,R^{\prime },f\right) }{-\log r},  \label{d6}
\end{equation}%
\begin{equation}
\sigma _{n,M}\left( f,0\right) =\underset{r\rightarrow 0}{\lim \sup }\frac{%
\log _{n+1}^{+}M_{0}\left( r,f\right) }{-\log r},  \label{d7}
\end{equation}%
where $\log _{1}^{+}x=\log ^{+}x=\max \left\{ \log x,0\right\} $ and $\log
_{n}^{+}x=\log ^{+}\log _{n-1}^{+}x$ for $n\geq 2.$

\begin{remark}
The choice of $R^{\prime }$ in (\ref{d1}) does not have any influence in the
values $\sigma _{T}\left( f,0\right) $ and $\tau _{T}\left( f,0\right) .$ In
fact, if we take two values of $R^{\prime },$ namely $0<R_{1}^{\prime
}<R_{2}^{\prime }<R,$ then we have%
\begin{equation*}
\int\limits_{R_{1}^{\prime }}^{R_{2}^{\prime }}\frac{n\left( t,f\right) }{t}%
dt=n\log \frac{R_{2}^{\prime }}{R_{1}^{\prime }},
\end{equation*}%
where $n$ designates the number of poles of $f\left( z\right) $ in the
region $\left\{ z\in 
\mathbb{C}
:R_{1}^{\prime }\leq \left\vert z\right\vert \leq R_{2}^{\prime }\right\} \ $%
which is bounded. Thus, $T_{0}\left( r,R_{1}^{\prime },f\right) =T_{0}\left(
r,R_{2}^{\prime },f\right) +C$ where $C$ is real constant. So, we can write
briefly $T_{0}\left( r,f\right) $ instead of $T_{0}\left( r,R^{\prime
},f\right) .$
\end{remark}

\begin{example}
Consider the function $f\left( z\right) =\exp \left\{ z^{2}+\frac{1}{z^{2}}%
\right\} .$ We have 
\begin{equation*}
T_{0}\left( r,f\right) =m_{0}\left( r,f\right) =\frac{1}{\pi }\left( r^{2}+%
\frac{1}{r^{2}}\right) ,
\end{equation*}
then $\sigma _{T}\left( f,0\right) =2,\ \tau _{T}\left( f,0\right) =\frac{1}{%
\pi }.$ Also we have%
\begin{equation*}
M_{0}\left( r,f\right) =\exp \left\{ r^{2}+\frac{1}{r^{2}}\right\} ,
\end{equation*}
then $\sigma _{M}\left( f,0\right) =2,\ \tau _{M}\left( f,0\right) =1.$
\end{example}

The main tool we use throughout this paper is the decomposition lemma of G.
Valiron.

\begin{lemma}
\label{val}\cite{val, mark} (Valiron's decomposition lemma) Let $f$ be
meromorphic function in $D\left( R_{1},R_{2}\right) ,$ and set $%
R_{1}<R^{\prime }<R_{2}.$ Then $f$ may be represented as%
\begin{equation*}
f\left( z\right) =z^{m}\phi \left( z\right) \mu \left( z\right)
\end{equation*}%
where\newline
a) The poles and zeros of $f$ in $D\left( R_{1},R^{\prime }\right) $ are
precisely the poles and zeros of $\phi \left( z\right) .$ The poles and
zeros of $f$ in $D\left( R^{\prime },R_{2}\right) $ are precisely the poles
and zeros of $\mu \left( z\right) .$\newline
b) $\phi \left( z\right) $ is meromorphic in $D\left( R_{1},\infty \right) $%
\ and analytic and nonzero in $D\left[ R^{\prime },\infty \right] .$\newline
c) $\phi \left( z\right) $ satisfies%
\begin{equation*}
\left\vert \frac{\phi ^{\prime }\left( \xi e^{i\theta }\right) }{\phi \left(
\xi e^{i\theta }\right) }\right\vert =O\left( \frac{1}{\xi ^{2}}\right) ,\
\xi \rightarrow \infty .
\end{equation*}%
d) $\mu \left( z\right) $ is meromorphic in $D\left( R\right) $ and analytic
and nonzero in $D\left( R^{\prime }\right) .$\newline
e) $m\in 
\mathbb{Z}
.$
\end{lemma}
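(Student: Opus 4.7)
The plan is to build $\mu$ and $\phi$ separately as Weierstrass-type products that carry exactly the divisor of $f$ on either side of the circle $|z|=R'$, and then to absorb the remaining nonvanishing holomorphic quotient using the Laurent decomposition available on an annulus.

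First I would construct $\mu$ in the disc $D(R_2)$. The zeros and poles of $f$ in $D(R',R_2)$ are discrete and can accumulate only on $|z|=R_2$; a classical Weierstrass product (with elementary factors $E_{p_n}$ of sufficiently high order in the ratio of two such products) yields a meromorphic function in $D(R_2)$ with exactly that divisor, analytic and nonzero on $|z|\le R'$. This delivers (a) and (d). Next I would construct $\phi$ by the same device after the inversion $z\mapsto 1/z$. Under this map the zeros and poles of $f$ in $D(R_1,R')$ become a discrete set in $\{|w|<1/R_1\}$ (or in all of $\mathbb{C}$, if $R_1=0$) accumulating only on $|w|=1/R_1$ (respectively only at $\infty$). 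A Weierstrass product in that disc, or the classical factorization theorem in $\mathbb{C}$, produces a meromorphic $\psi(w)$ with the prescribed divisor, analytic and nonzero on $|w|\le 1/R'$. Setting $\phi(z)=\psi(1/z)$ gives a function meromorphic on $\{|z|>R_1\}$, analytic and nonzero on $\{|z|\ge R'\}$ and at infinity. Because $\psi$ is analytic at the origin with $\psi(0)\neq 0$, one has $\phi(z)=\psi(0)+O(1/z)$ near $\infty$, and direct differentiation yields $|\phi'/\phi|=O(1/z^2)$, which is condition (c); conditions (a) and (b) are immediate from the construction.

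With $\phi$ and $\mu$ in hand, the ratio $g(z)=f(z)/(\phi(z)\mu(z))$ is holomorphic and nonvanishing on the annulus $D(R_1,R_2)$, hence of the form $g(z)=z^m e^{h(z)}$ for some $m\in\mathbb{Z}$ (the winding number of $g$ about the origin) and some holomorphic $h$ on the annulus. I would split its Laurent expansion as $h=h_-+h_+$, where $h_-(z)=\sum_{n\ge 0}c_n z^n$ is holomorphic in $D(R_2)$ and $h_+(z)=\sum_{n<0}c_n z^n$ is holomorphic on $\{|z|>R_1\}$ and vanishes at $\infty$. Absorbing $e^{h_-}$ into $\mu$ and $e^{h_+}$ into $\phi$ preserves all divisor conditions (the exponentials are nonvanishing) and preserves (c), since $h_+'(z)=O(1/z^2)$ at infinity. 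This yields the desired factorization $f=z^m\phi\mu$, with (e) automatic.

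The step I expect to require the most care is the convergence of the Weierstrass products used to build $\mu$ and $\psi$, since the prescribed zeros and poles may accumulate toward the relevant boundary. This is the classical difficulty handled by choosing the orders $p_n$ of the elementary factors to grow fast enough to force uniform convergence on compact subsets; once this is granted, the rest of the argument is formal bookkeeping.
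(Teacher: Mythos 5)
The paper does not prove this lemma at all: it is quoted verbatim from the literature (Valiron's book and the paper of Mark and Zhuan on logarithmic derivatives in an annulus), so there is no internal proof to compare against. Your reconstruction is essentially the classical argument and it is sound: build $\mu$ and $\phi$ (the latter via $w=1/z$) as Weierstrass-type quotients carrying the divisor of $f$ on either side of $|z|=R'$, observe that the leftover nonvanishing factor on the annulus is $z^{m}e^{h}$ with $m$ the winding number, split $h$ by its Laurent series into a part holomorphic in $D(R_{2})$ and a part holomorphic near $\infty$ vanishing there, and absorb the exponentials; your checks that $\phi'/\phi=O(1/z^{2})$ survives both the inversion step and the absorption of $e^{h_{+}}$ are exactly the right ones. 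Two points deserve to be made explicit. First, zeros or poles of $f$ lying on the circle $|z|=R'$ belong to neither open region as you have set things up, and then $f/(\phi\mu)$ would not be zero-free on the annulus; you must assign that part of the divisor to one factor (conventionally $\mu$, which is only required to be nonvanishing in the open disc $D(R')$) or note that $R'$ can be moved off the divisor. Second, the convergence worry you flag is real but resolves easily here: the points prescribed for $\mu$ lie in $R'<|a_{n}|<R_{2}$ and can accumulate only on $|z|=R_{2}$, so on any compact subset of $D(R_{2})$ all but finitely many satisfy $|z/a_{n}|\leq q<1$, and the plain elementary factors $E_{n}(z/a_{n})$ already converge locally uniformly (no boundary-adapted factors are needed); the same remark applies to $\psi$ after inversion, where the points accumulate only at $|w|=1/R_{1}$ or at $\infty$. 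With these two clarifications your proof is complete and is, in substance, the standard one behind the cited result.
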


\begin{remark}
\label{rem1}Let $f$ be a non constant meromorphic function in $D\left(
0,R\right) $ and $f\left( z\right) =z^{m}\phi \left( z\right) \mu \left(
z\right) $ is a Valiron's decomposition. Set $\tilde{\phi}\left( z\right)
=z^{m}\phi \left( z\right) .$ It is easy to see that%
\begin{equation}
T_{0}\left( r,f\right) =T_{0}\left( r,\tilde{\phi}\right) +O\left( 1\right) .
\label{re1}
\end{equation}%
If $f$ be a non constant analytic function in $D\left( 0,R\right) ,$ then $%
\tilde{\phi}\left( z\right) $ is analytic in $D\left( 0,\infty \right] $ and
by \cite{fet} and (\ref{re1}), we obtain that $\sigma _{n,T}\left(
f,0\right) =\sigma _{n,M}\left( f,0\right) $ for $n\geq 1.$
\end{remark}

Now, we give estimates on the logarithmic derivative of a meromorphic function
in $D\left( 0,R\right) .$

\begin{theorem}
\label{th1}Let $f$ be meromorphic function in $D\left( 0,R\right) $ with a
singular point at the origin and let $\alpha >0$, then \newline
(i) there exists a set $E_{1}^{\ast }\subset \left( 0,R^{\prime }\right) \
\left( 0<R^{\prime }<R\right) $ that has finite logarithmic measure $%
\int\limits_{0}^{R^{\prime }}\frac{\chi _{E_{1}^{\ast }}\left( t\right) }{t}%
dt<\infty $ and a constant $C>0$ such that for all $r=\left\vert
z\right\vert $ satisfying $r\in \left( 0,R^{\prime }\right) \backslash
E_{1}^{\ast }$, we have%
\begin{equation}
\left\vert \frac{f^{\left( k\right) }\left( z\right) }{f\left( z\right) }%
\right\vert \leq C\left[ \frac{1}{r}T_{0}\left( \frac{r}{\alpha },f\right)
\log ^{\alpha }\left( \frac{1}{r}\right) \log T_{0}\left( \frac{r}{\alpha }%
,f\right) \right] ^{k}\ \left( k\in 
\mathbb{N}
\right) ;  \label{l3}
\end{equation}%
(ii) there exists a set $E_{2}^{\ast }\subset \left[ 0,2\pi \right) $ that
has a linear measure zero such that for all $\theta \in \left[ 0,2\pi
\right) \backslash E_{2}^{\ast }$ there exists a constant $r_{0}=r_{0}\left(
\theta \right) >0$ such that(\ref{l3}) holds for all $z$ satisfying $\arg
z\in \left[ 0,2\pi \right) \backslash E_{2}^{\ast }$ and $r=\left\vert
z\right\vert <r_{0}$.
\end{theorem}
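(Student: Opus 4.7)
The plan is to reduce Theorem \ref{th1} to the already-known logarithmic derivative estimates near an isolated singularity in the global setting of \cite{fet, hamj}, using Valiron's decomposition (Lemma \ref{val}) as the bridge. Fix $0 < R'' < R' < R$ and apply Lemma \ref{val} with $(R_{1}, R_{2}) = (0, R)$ to write
\[ f(z) = z^{m}\,\phi(z)\,\mu(z), \qquad m \in \mathbb{Z}, \]
on $D(0,R)$, where $\mu$ is analytic and non-vanishing on $D(R')$ and $\phi$ is meromorphic on $\mathbb{C}\setminus\{0\}$ and analytic, non-vanishing on $D[R',\infty]$. By condition (c) of Lemma \ref{val}, $\phi$ has bounded logarithmic derivative at infinity and hence a finite nonzero limit at $\infty$, so the auxiliary function $\tilde\phi(z) := z^{m}\phi(z)$ is in fact meromorphic on $\overline{\mathbb{C}}\setminus\{0\}$ --- precisely the class of functions treated in \cite{fet, hamj}.

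Next, I would apply the existing logarithmic derivative estimate of \cite{fet, hamj} to $\tilde\phi$: for each $\alpha > 0$ and each $j \leq k$, this yields a set $E_{1}^{\ast} \subset (0, R')$ of finite logarithmic measure and a constant $C_{j} > 0$ such that
\[ \left|\frac{\tilde\phi^{(j)}(z)}{\tilde\phi(z)}\right| \le C_{j} \left[\frac{1}{r}\, T_{0}\!\left(\frac{r}{\alpha}, \tilde\phi\right) \log^{\alpha}\!\left(\frac{1}{r}\right) \log T_{0}\!\left(\frac{r}{\alpha}, \tilde\phi\right)\right]^{j} \]
for all $r = |z| \in (0, R') \setminus E_{1}^{\ast}$. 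By Remark \ref{rem1}, $T_{0}(r, \tilde\phi) = T_{0}(r, f) + O(1)$, so the right-hand side may equivalently be written in terms of $T_{0}(r, f)$ without disturbing its form.

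It remains to transfer the estimate from $\tilde\phi$ back to $f = \tilde\phi\,\mu$. The Leibniz rule for logarithmic derivatives gives
\[ \frac{f^{(k)}(z)}{f(z)} = \sum_{j=0}^{k} \binom{k}{j}\, \frac{\tilde\phi^{(j)}(z)}{\tilde\phi(z)} \cdot \frac{\mu^{(k-j)}(z)}{\mu(z)}, \]
and each factor $\mu^{(k-j)}/\mu$ is analytic on $D(R')$, hence uniformly bounded on $D(R'')$. Combining this with the estimates on $\tilde\phi^{(j)}/\tilde\phi$ and observing that, for small $r$, the $j=k$ term dominates the sum yields (\ref{l3}) for an appropriate $C > 0$. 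Part (ii) is obtained by the same scheme, invoking instead the pointwise (angular) version of the logarithmic derivative estimate from \cite{fet, hamj}, which produces an exceptional set of angles $E_{2}^{\ast}$ of linear measure zero.

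The main technical point to watch is not any single step in isolation but the cumulative bookkeeping: one must check that the three adjustments --- insertion of the power $z^{m}$ into $\tilde\phi$, passage from $T_{0}(r, \tilde\phi)$ to $T_{0}(r, f)$ via Remark \ref{rem1}, and the Leibniz recombination with the analytic, non-vanishing factor $\mu$ --- each introduce only bounded perturbations that can be absorbed into the constant $C$, so that the precise form of the right-hand side of (\ref{l3}) is preserved.
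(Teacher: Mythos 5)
Your proposal is correct and takes essentially the same approach as the paper's proof: Valiron's decomposition $f = z^m\phi\mu$, a logarithmic derivative estimate for $\tilde\phi = z^m\phi$, uniform boundedness of $\mu^{(j)}/\mu$ on a smaller disc, and recombination via the Leibniz rule together with Remark~\ref{rem1}. The only difference is that you invoke the already-established estimate of \cite{fet, hamj} for functions meromorphic in $\overline{\mathbb{C}}\setminus\{0\}$ directly, whereas the paper re-derives that step on the spot by the substitution $w = 1/z$ followed by Gundersen's Lemma~\ref{lem1} and Lemma~\ref{lem2}; the underlying argument is identical.
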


The following two corollaries are consequences of Theorem \ref{th1} and have
independent interest.

\begin{corollary}
\label{coro1}Let $f$ be a non constant meromorphic function in $D\left(
0,R\right) $ with a singular point at the origin of finite order $\sigma
\left( f,0\right) =\sigma <\infty $; let $\varepsilon >0$ be a given
constant. Then the following two statements hold.\newline
i) There exists a set $E_{1}^{\ast }\subset \left( 0,R^{\prime }\right) $
that has finite logarithmic measure $\int\limits_{0}^{R^{\prime }}\frac{\chi
_{E_{1}^{\ast }}\left( t\right) }{t}dt<\infty $ such that for all $%
r=\left\vert z\right\vert $ satisfying $r\in \left( 0,R^{\prime }\right)
\backslash E_{1}^{\ast }$, we have%
\begin{equation}
\left\vert \frac{f^{\left( k\right) }\left( z\right) }{f\left( z\right) }%
\right\vert \leq \frac{1}{r^{k\left( \sigma +1+\varepsilon \right) }},\
\left( k\in 
\mathbb{N}
\right) .  \label{l7}
\end{equation}%
ii) There exists a set $E_{2}^{\ast }\subset \left[ 0,2\pi \right) $ that
has a linear measure zero such that for all $\theta \in \left[ 0,2\pi
\right) \backslash E_{2}^{\ast }$ there exists a constant $r_{0}=r_{0}\left(
\theta \right) >0$ such that for all $z$ satisfying $\arg \left( z\right)
\in \left[ 0,2\pi \right) \backslash E_{2}^{\ast }$ and $r=\left\vert
z\right\vert <r_{0}$ the inequality (\ref{l7}) holds.
\end{corollary}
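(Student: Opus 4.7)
The plan is to derive Corollary \ref{coro1} directly from Theorem \ref{th1} by feeding the growth hypothesis $\sigma_T(f,0)=\sigma<\infty$ into the general bound (\ref{l3}) and then absorbing all logarithmic factors into an arbitrarily small power of $1/r$.

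First, I would fix any convenient $\alpha>0$ (say $\alpha=1$, although the freedom in $\alpha$ is not essential here) and invoke Theorem \ref{th1} to get a set $E_1^\ast\subset(0,R')$ of finite logarithmic measure, resp.\ a set $E_2^\ast\subset[0,2\pi)$ of linear measure zero, on the complement of which
\[
\left\vert\frac{f^{(k)}(z)}{f(z)}\right\vert \le C\left[\frac{1}{r}T_{0}\!\left(\tfrac{r}{\alpha},f\right)\log^{\alpha}\!\left(\tfrac{1}{r}\right)\log T_{0}\!\left(\tfrac{r}{\alpha},f\right)\right]^{k}
\]
holds. The goal is to replace the bracket by $r^{-(\sigma+1+\varepsilon)}$.

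Next, I would unpack the definition of finite order. From (\ref{d4}) and Remark 1, for every $\varepsilon_1>0$ there exists $r_1>0$ such that for all $r\in(0,r_1)$,
\[
T_{0}(r,f) \le \left(\tfrac{1}{r}\right)^{\sigma+\varepsilon_1}.
\]
Replacing $r$ by $r/\alpha$, this yields $T_0(r/\alpha,f)\le(\alpha/r)^{\sigma+\varepsilon_1}$, and hence
\[
\log T_{0}\!\left(\tfrac{r}{\alpha},f\right) \le (\sigma+\varepsilon_1)\log\!\left(\tfrac{\alpha}{r}\right) \le C_1\log\!\left(\tfrac{1}{r}\right)
\]
for $r$ sufficiently small. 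Substituting both estimates into the bracket gives
\[
\frac{1}{r}T_{0}\!\left(\tfrac{r}{\alpha},f\right)\log^{\alpha}\!\left(\tfrac{1}{r}\right)\log T_{0}\!\left(\tfrac{r}{\alpha},f\right) \le \frac{C_2}{r^{\,1+\sigma+\varepsilon_1}}\,\log^{\alpha+1}\!\left(\tfrac{1}{r}\right).
\]

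Finally, since $\log^{\alpha+1}(1/r)=o(r^{-\varepsilon_2})$ as $r\to 0$ for every $\varepsilon_2>0$, choosing $\varepsilon_1$ and $\varepsilon_2$ with $\varepsilon_1+\varepsilon_2<\varepsilon$ I obtain
\[
\left\vert\frac{f^{(k)}(z)}{f(z)}\right\vert \le \left(\frac{1}{r}\right)^{k(\sigma+1+\varepsilon)}
\]
on the complement of $E_1^\ast$ (after possibly shrinking $R'$, which does not increase the logarithmic measure), proving (i); part (ii) follows by the same substitution along the rays not in $E_2^\ast$, using the direction-dependent $r_0(\theta)$ provided by Theorem \ref{th1}. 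There is no real obstacle here beyond bookkeeping of the three small parameters; the only point requiring a little care is that the estimate on $T_0$ requires $r$ small, so the radii $R'$ and $r_0(\theta)$ coming from Theorem \ref{th1} may have to be reduced, which is harmless.
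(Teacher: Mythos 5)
Your proof is correct and follows exactly the route the paper intends: the paper states Corollary~\ref{coro1} as an immediate consequence of Theorem~\ref{th1} without written proof, and your argument (substitute $T_0(r,f)\le r^{-(\sigma+\varepsilon_1)}$ into \eqref{l3}, note $\log T_0(r/\alpha,f)=O(\log(1/r))$, and absorb the $\log^{\alpha+1}(1/r)$ factor into $r^{-\varepsilon_2}$ with $\varepsilon_1+\varepsilon_2<\varepsilon$) is precisely the bookkeeping the paper leaves to the reader. The remark about shrinking $R'$ or $r_0(\theta)$ is the right point of care and is indeed harmless.
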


\begin{corollary}
\label{coro2}Let $f$ be a non constant meromorphic function in $D\left(
0,R\right) $ with a singular point at the origin of finite iterated order $%
\sigma _{n}\left( f,0\right) =\sigma <\infty \ \left( n\geq 2\right) $; let $%
\varepsilon >0$ be a given constant. Then the following two statements hold.%
\newline
i) There exists a set $E_{1}^{\ast }\subset \left( 0,R^{\prime }\right) $
that has finite logarithmic measure $\int\limits_{0}^{R^{\prime }}\frac{\chi
_{E_{1}^{\ast }}\left( t\right) }{t}dt<\infty $ such that for all $%
r=\left\vert z\right\vert $ satisfying $r\in \left( 0,R^{\prime }\right)
\backslash E_{1}^{\ast }$, we have%
\begin{equation*}
\left\vert \frac{f^{\left( k\right) }\left( z\right) }{f\left( z\right) }%
\right\vert \leq \exp _{n-1}\left\{ \frac{1}{r^{\sigma +\varepsilon }}%
\right\} ,\ \left( k\in 
\mathbb{N}
\right) .
\end{equation*}%
ii) There exists a set $E_{2}^{\ast }\subset \left[ 0,2\pi \right) $ that
has a linear measure zero such that for all $\theta \in \left[ 0,2\pi
\right) \backslash E_{2}^{\ast }$ there exists a constant $r_{0}=r_{0}\left(
\theta \right) >0$ such that for all $z$ satisfying $\arg \left( z\right)
\in \left[ 0,2\pi \right) \backslash E_{2}^{\ast }$ and $r=\left\vert
z\right\vert <r_{0}$ the inequality (\ref{l7}) holds.
\end{corollary}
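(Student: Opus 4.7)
The plan is to deduce the corollary from Theorem~\ref{th1} by translating the iterated-order hypothesis into a concrete bound on $T_0(r,f)$ and then inserting it into the logarithmic-derivative estimate. Given $\varepsilon>0$, fix $\varepsilon_1\in(0,\varepsilon)$. By the definition (\ref{d6}) and the hypothesis $\sigma_{n,T}(f,0)=\sigma$, there exists $r_1>0$ such that
\[
\log_n^{+} T_0(r,f)\leq (\sigma+\varepsilon_1)\log(1/r)\quad\text{for all }r\in(0,r_1).
\]
Exponentiating $n-1$ times (the relevant quantities all exceed $1$ as $r\to 0^{+}$, so each $\log^{+}$ may be replaced by an ordinary $\log$) yields
\[
T_0(r,f)\leq \exp_{n-1}\bigl\{r^{-(\sigma+\varepsilon_1)}\bigr\}.
\]

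Next, apply Theorem~\ref{th1}(i) with $\alpha=1$ to produce an exceptional set $E_1^{\ast}\subset(0,R^{\prime})$ of finite logarithmic measure and a constant $C>0$ such that
\[
\left|\frac{f^{(k)}(z)}{f(z)}\right|\leq C\left[\frac{1}{r}T_0(r,f)\log(1/r)\log T_0(r,f)\right]^{k}
\]
for every $r=|z|\in(0,R^{\prime})\setminus E_1^{\ast}$. Substitute the bound above. The dominant factor $T_0(r,f)$ is at most $\exp_{n-1}\{r^{-(\sigma+\varepsilon_1)}\}$, while $1/r$, $\log(1/r)$ and $\log T_0(r,f)\leq \exp_{n-2}\{r^{-(\sigma+\varepsilon_1)}\}$ are strictly smaller-order contributions. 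Since $n\geq 2$ the outermost layer of $\exp_{n-1}$ is a genuine exponential, and from
\[
r^{-(\sigma+\varepsilon)}-r^{-(\sigma+\varepsilon_1)}\to\infty\qquad (r\to 0^{+})
\]
one absorbs these lower-order factors, together with the $k$th-power that only multiplies the innermost exponent by a constant, by enlarging the exponent from $\sigma+\varepsilon_1$ to $\sigma+\varepsilon$. This gives the desired bound $\exp_{n-1}\{r^{-(\sigma+\varepsilon)}\}$. Part (ii) is identical, except that Theorem~\ref{th1}(ii) is used to provide the angular exceptional set $E_2^{\ast}\subset[0,2\pi)$ of linear measure zero, with a radius threshold $r_0(\theta)$ replacing the logarithmic-measure exceptional set.

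The step to watch is the iterated-exponentiation bookkeeping: each pass from $\log^{+}$ to $\log$ must be justified once $r$ is small enough that $T_0(r,f)>e$, and the final absorption of the polynomial and $k$th-power factors into a single $r^{-(\sigma+\varepsilon)}$ in the innermost exponent has to be carried out uniformly in $n\geq 2$. These are routine but easily mis-handled if the gap $\varepsilon-\varepsilon_1$ is taken too small to let the divergence above swallow all lower-order terms.
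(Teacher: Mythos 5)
Your derivation is correct and is exactly the route the paper intends: Corollary \ref{coro2} is stated there without proof as a direct consequence of Theorem \ref{th1}, obtained precisely by converting $\sigma_n(f,0)=\sigma$ into the bound $T_0(r,f)\leq \exp_{n-1}\{r^{-(\sigma+\varepsilon_1)}\}$ and absorbing the factors $1/r$, $\log(1/r)$, $\log T_0$ and the $k$th power into $\exp_{n-1}\{r^{-(\sigma+\varepsilon)}\}$ using $\varepsilon_1<\varepsilon$. The only bookkeeping left implicit is harmless: the ``for $r$ small enough'' thresholds are accommodated by enlarging $E_1^{\ast}$ by an interval $[\rho,R^{\prime})$, which has finite logarithmic measure.
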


As applications of Theorem \ref{th1}, we have the following results.

\begin{theorem}
\label{th2b}Let $A_{0}\left( z\right) \not\equiv 0,A_{1}\left( z\right)
,...,A_{k-1}\left( z\right) $ be analytic functions in $D\left( 0,R\right) $%
. All solutions $f$ of 
\begin{equation}
f^{\left( k\right) }+A_{k-1}\left( z\right) f^{\left( k-1\right)
}+...+A_{1}\left( z\right) f^{\prime }+A_{0}\left( z\right) f=0  \label{eq3}
\end{equation}%
satisfy $\sigma _{n+1}\left( f,0\right) \leq \alpha $ if and only if $\sigma
_{n}\left( A_{j},0\right) \leq \alpha $ for all $\left( j=0,1,...,k-1\right) 
$, where $n$ is a positive integer. Moreover, if $q\in \left\{
0,1,...,k-1\right\} $ is the largest index for which $\sigma _{n}\left(
A_{q},0\right) =\underset{0\leq j\leq k-1}{\max }\left\{ \sigma _{n}\left(
A_{j},0\right) \right\} $ then there are at least $k-q$ linearly independent
solutions $f$ of (\ref{eq3}) such that $\sigma _{n+1}\left( f,0\right)
=\sigma _{n}\left( A_{q},0\right) .$
\end{theorem}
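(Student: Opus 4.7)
The plan is to decompose the theorem into three tasks: the necessity ($\Rightarrow$) and sufficiency ($\Leftarrow$) halves of the biconditional, and the refinement on the $k-q$ linearly independent extremal solutions. The main engine throughout is Theorem \ref{th1} and its iterated form Corollary \ref{coro2}.

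\emph{Necessity.} Fix a fundamental system $f_1,\ldots,f_k$ of (\ref{eq3}). Applied to each $f_i$, the ODE gives a linear system for $A_0,\ldots,A_{k-1}$ whose coefficient matrix has nonzero Wronskian $W(f_1,\ldots,f_k)$. Cramer's rule expresses each $A_j$ as a quotient of $k\times k$ determinants in the $f_i^{(m)}$; dividing the $i$-th row by $f_i$ turns every entry into a logarithmic derivative $f_i^{(m)}/f_i$. Corollary \ref{coro2} bounds these logarithmic derivatives outside an exceptional set of finite logarithmic measure by $\exp_{n-1}\{r^{-(\alpha+\varepsilon)}\}$; since each $A_j$ is analytic in $D(0,R)$ one has $T_0(r,A_j)=m_0(r,A_j)$, and integrating the pointwise bound over $|z|=r$ yields $T_0(r,A_j)\le\exp_{n-1}\{r^{-(\alpha+\varepsilon)}\}$, whence $\sigma_n(A_j,0)\le\alpha$.

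\emph{Sufficiency.} Assume $\sigma_n(A_j,0)\le\alpha$ and let $f$ be any solution; since the coefficients are analytic in $D(0,R)$, so is $f$. For $|z|=r$ small, Cauchy's integral formula on the disk $\{|w-z|=r/2\}\subset D(0,R)$ gives $|f^{(j)}(z)|\le j!\,(r/2)^{-j}M_0(r/2,f)$. Substituting into (\ref{eq3}) together with $|A_j(z)|\le\exp_{n-1}\{r^{-(\alpha+\varepsilon)}\}$ produces an inequality that, integrated along a ray from a fixed interior circle $|z|=r_1<R$ inward to $|z|=r$, yields $\log M_0(r,f)\le\exp_n\{r^{-(\alpha+\varepsilon)}\}$; Remark \ref{rem1} converts this to $\sigma_{n+1}(f,0)\le\alpha$.

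\emph{Refinement.} Set $\beta=\sigma_n(A_q,0)=\max_j\sigma_n(A_j,0)$ and $V=\{f\text{ solution}:\sigma_{n+1}(f,0)<\beta\}$. Supposing for contradiction that $\dim V\ge q+1$, pick $f_1,\ldots,f_{q+1}\in V$ linearly independent; with $A_{q+1},\ldots,A_{k-1}$ treated as known (they have $\sigma_n<\beta$ by maximality of $q$), the $(q+1)$ ODE relations form the $(q+1)\times(q+1)$ Cramer system
\[
\sum_{j=0}^{q}A_j(z)\,f_i^{(j)}(z)=-f_i^{(k)}(z)-\sum_{j=q+1}^{k-1}A_j(z)\,f_i^{(j)}(z),\qquad i=1,\ldots,q+1,
\]
whose determinant is $W(f_1,\ldots,f_{q+1})$. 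Solving for $A_q$ expresses it as a rational combination of logarithmic derivatives $f_i^{(m)}/f_i$ and of $A_{q+1},\ldots,A_{k-1}$; inserting the $m$-function form $m_0(r,f_i^{(m)}/f_i)=O(\log T_0(r,f_i))$ derived from Theorem \ref{th1} and using analyticity $T_0(A_q)=m_0(A_q)$ gives $\sigma_n(A_q,0)<\beta$, a contradiction. The chief obstacle is this last bookkeeping step: the Wronskian denominator must be controlled via Abel's identity $W'/W=c_q$ for the reduced operator $y^{(q+1)}-c_q y^{(q)}-\cdots-c_0 y=0$ annihilating $f_1,\ldots,f_{q+1}$, so that $1/W$ contributes only at the $m$-function level of a logarithmic-derivative expression and the genuine index drop from $\sigma_{n+1}$ to $\sigma_n$ is actually achieved; without this refinement, Cramer's rule would only yield the weaker bound $\sigma_{n+1}(A_q,0)<\beta$, which is consistent with $\sigma_n(A_q,0)=\beta$ and thus produces no contradiction.
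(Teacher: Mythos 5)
Your decomposition into sufficiency, necessity, and the refinement on $k-q$ extremal solutions is reasonable, but the execution has a genuine and recurring gap that you flag in one place but never actually close.

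The central problem is the Wronskian denominator. In both your necessity argument and your refinement argument you express a coefficient $A_j$ (respectively $A_q$) by Cramer's rule as a quotient $\det(M_j)/\det(M)$, where after row-normalization $M$ is a matrix of logarithmic derivatives $f_i^{(m)}/f_i$. You then integrate a pointwise bound on the \emph{entries} to get $T_0(r,A_j)=m_0(r,A_j)\leq\exp_{n-1}\{r^{-(\alpha+\varepsilon)}\}$. This step is invalid: bounding the entries of $M$ from above gives nothing about $1/\det(M)$, which can be arbitrarily large where $\det(M)$ is small. You acknowledge this obstacle in the refinement part and gesture at Abel's identity $W'/W$ for the reduced operator, but you never show that this controls $m_0(r,1/W)$ at the needed level (in this setting $m_0(r,1/W)\leq T_0(r,W)+O(1)$ brings back the unbounded characteristic of a determinant in functions of iterated order $n+1$, so the bound you claim on $A_q$ does not follow). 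Because you rely on exactly the same unresolved step in the necessity half as well, the biconditional is not actually proved. The paper avoids the denominator entirely: it proves necessity \emph{indirectly} as a corollary of the refinement, and it proves the refinement by the order-reduction machinery of Lemma \ref{lem11} and Lemma \ref{lem12}, which produces a reduced equation whose coefficient $A_{q,0}$ is $A_{0,q}$ plus a sum $G_q$ of products of the $A_{i,j}$ with \emph{single} logarithmic derivatives — no determinant denominators ever appear — and then reaches the contradiction by comparing the resulting upper bound on a set of finite logarithmic measure against the lower bound from Lemma \ref{lem6} on a set of infinite logarithmic measure.

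Two further issues. First, your sufficiency argument as stated does not go through: the Cauchy estimate on the circle $\{|w-z|=r/2\}$ bounds $f^{(j)}(z)$ by the maximum of $|f|$ on a circle that dips \emph{closer} to the singular point, so you are bounding derivatives by a \emph{larger} quantity than $M_0(r,f)$, which is the wrong direction; a correct Gronwall-type argument can be assembled, but it is not the one you wrote. The paper instead derives sufficiency from Lemma \ref{lem7}, which runs the Wiman--Valiron central-index estimate through the Valiron decomposition $f=\tilde\phi\mu$ and Lemma \ref{lem6b}, giving $V_0(r)\leq Cr^k M_0(r)$ directly. Second, a small bookkeeping point: in your refinement you must also make precise that the extra known terms $A_{q+1},\ldots,A_{k-1}$ have strictly smaller iterated order by maximality of $q$, and that their contribution stays below $\exp_{n-1}\{r^{-\alpha_q+2\varepsilon}\}$ uniformly outside a finite-logarithmic-measure set; this is exactly what the inductive estimate (\ref{5.8}) of Lemma \ref{lem11} packages, and without some substitute for it your contradiction is not quantitative.
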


Similar result to Theorem \ref{th2b} in the unit disc has been given by \cite%
[Theorem 1.1]{heit}.

\begin{corollary}
\label{cor1}Let $A_{0}\left( z\right) \not\equiv 0,A_{1}\left( z\right)
,...,A_{k-1}\left( z\right) $ be analytic functions in $D\left( 0,R\right) $
satisfying $\sigma _{n}\left( A_{j},0\right) <\sigma _{n}\left(
A_{0},0\right) <\infty \ \left( j=1,...,k-1\right) .$ Then, every solution $%
f\left( z\right) \not\equiv 0$ of (\ref{eq3}) satisfies $\sigma _{n+1}\left(
f,0\right) =\sigma _{n}\left( A_{0},0\right) .$
\end{corollary}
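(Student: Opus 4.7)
The plan is to combine the two directions separately: the upper bound on $\sigma_{n+1}(f,0)$ follows immediately from Theorem \ref{th2b}, and the matching lower bound is obtained by contradiction, using Theorem \ref{th1} (in the form of Corollary \ref{coro2}) to bound the logarithmic derivatives of $f$ and then reading the equation in reverse to squeeze $A_0$.

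For the upper bound, set $\sigma := \sigma_n(A_0,0)$. Since by hypothesis $\sigma_n(A_j,0) < \sigma$ for $j=1,\dots,k-1$, we have $\max_j \sigma_n(A_j,0) = \sigma$, so Theorem \ref{th2b} applied with $\alpha = \sigma$ gives $\sigma_{n+1}(f,0) \leq \sigma$ for every solution $f$ of (\ref{eq3}).

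For the lower bound, I would suppose toward contradiction that some solution $f \not\equiv 0$ satisfies $\sigma_{n+1}(f,0) = \beta < \sigma$ and rewrite (\ref{eq3}) as
\begin{equation*}
-A_0(z) \;=\; \frac{f^{(k)}(z)}{f(z)} + \sum_{j=1}^{k-1} A_j(z)\,\frac{f^{(j)}(z)}{f(z)}.
\end{equation*}
Applying Corollary \ref{coro2} to $f$ with index $n+1$ in place of $n$ (or Corollary \ref{coro1} when $n=1$) yields a set $E_1^* \subset (0,R')$ of finite logarithmic measure such that $|f^{(j)}(z)/f(z)| \leq \exp_n\{r^{-(\beta+\varepsilon)}\}$ for all $z$ with $|z|=r \in (0,R')\setminus E_1^*$ and $j=1,\dots,k$. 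On the other hand, each $A_j$ is analytic in $D(0,R)$ with $\sigma_n(A_j,0) = \sigma_j < \sigma$, so by Remark \ref{rem1} and the definition of the iterated $M$-order,
\begin{equation*}
|A_j(z)| \;\leq\; M_0(r,A_j) \;\leq\; \exp_n\{r^{-(\sigma_j+\varepsilon)}\}
\end{equation*}
for all $r$ sufficiently small. Choosing $\varepsilon>0$ so that $\delta := \max\{\beta,\sigma_1,\dots,\sigma_{k-1}\}+2\varepsilon < \sigma$ and plugging both estimates into the displayed identity would yield
\begin{equation*}
M_0(r,A_0) \;\leq\; k\,\exp_n\{r^{-\delta}\}
\end{equation*}
for $r \in (0,R')\setminus E_1^*$ small enough.

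The concluding step is to deduce from this that $\sigma_{n,M}(A_0,0)\leq \delta < \sigma$, contradicting the hypothesis $\sigma_n(A_0,0)=\sigma$. The one subtlety here, which I expect to be the main technical annoyance rather than a real obstacle, is removing the exceptional set $E_1^*$: since $M_0(\,\cdot\,,A_0)$ is continuous on $(0,R)$ and $E_1^*$ has finite logarithmic measure (hence its complement is dense near $0$), any sequence $r_m\to 0$ realizing the $\limsup$ defining $\sigma_{n,M}(A_0,0)$ can be perturbed to a nearby sequence outside $E_1^*$, so the bound above transfers to the full $\limsup$ and yields the desired contradiction.
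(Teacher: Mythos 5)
Your overall strategy is sound and essentially coincides with the paper's: the upper bound is Theorem \ref{th2b} (or, more directly, Lemma \ref{lem7}), and your contradiction argument for the lower bound is exactly the $q=0$ case of part 2) of the proof of Theorem \ref{th2b} (and mirrors Case (i) in the proof of Theorem \ref{th2}). Your intermediate estimates are fine: Corollary \ref{coro2} applied with index $n+1$ gives $\left\vert f^{(j)}(z)/f(z)\right\vert \le \exp_{n}\{r^{-(\beta+\varepsilon)}\}$ off a set $E_{1}^{\ast}$ of finite logarithmic measure (note that for $n=1$ you still want Corollary \ref{coro2} with index $2$, not Corollary \ref{coro1}, since $\sigma_{1}(f,0)$ may be infinite), and the definition (\ref{d7}) with Remark \ref{rem1} gives $M_{0}(r,A_{j})\le \exp_{n}\{r^{-(\sigma_{j}+\varepsilon)}\}$ for small $r$; together these do yield $M_{0}(r,A_{0})\le k\exp_{n}\{r^{-\delta}\}$ with $\delta<\sigma$ for all small $r\notin E_{1}^{\ast}$.

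The gap is in your concluding step. Continuity of $M_{0}(\cdot,A_{0})$ plus density of the complement of $E_{1}^{\ast}$ does not let you transfer the bound to the full $\limsup$: to exit $E_{1}^{\ast}$ you may have to move a point $r_{m}$ by a definite multiplicative amount, and continuity gives no control on how much $M_{0}(r,A_{0})$ can grow across such a gap, so the perturbation argument as stated proves nothing about $M_{0}$ at the points of $E_{1}^{\ast}$. Two standard repairs: (a) do what the paper does and never remove the exceptional set --- apply Lemma \ref{lem6} (in its iterated-order form, proved exactly as Lemma \ref{lem5}) to $A_{0}$ to obtain a set $F\subset(0,R^{\prime})$ of \emph{infinite} logarithmic measure on which $M_{0}(r,A_{0})\ge \exp_{n}\{r^{-(\sigma-\varepsilon^{\prime})}\}$; since $E_{1}^{\ast}$ has finite logarithmic measure, $F\backslash E_{1}^{\ast}$ clusters at $0$, and comparing the two bounds at $r\in F\backslash E_{1}^{\ast}$ gives the contradiction for suitably small $\varepsilon,\varepsilon^{\prime}$; or (b) justify the removal honestly: since $\sigma_{n}(A_{0},0)=\sigma>0$ forces $M_{0}(r,A_{0})\rightarrow\infty$ as $r\rightarrow0$ and $\log M_{0}$ is convex in $\log r$ (Hadamard three circles), $M_{0}(\cdot,A_{0})$ is non-increasing for small $r$, so for small $r\in E_{1}^{\ast}$ one can pick $r^{\prime}\in[r/2,r]\backslash E_{1}^{\ast}$ (possible because the logarithmic measure of $E_{1}^{\ast}\cap(0,r)$ is eventually less than $\log 2$) and get $M_{0}(r,A_{0})\le M_{0}(r^{\prime},A_{0})\le k\exp_{n}\{(r/2)^{-\delta}\}$, which still yields $\sigma_{n}(A_{0},0)\le\delta<\sigma$. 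With either fix the proof closes; as written, the last step does not.
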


\begin{corollary}
Let $b\neq 0$ be complex constants and $n$ be a positive integer. Let $%
A\left( z\right) ,B\left( z\right) \not\equiv 0$ be analytic functions in $%
D\left( 0,R\right) $ with $\max \left\{ \sigma \left( A,0\right) ,\sigma
\left( B,0\right) \right\} <n.$ Then, every solution $f\left( z\right)
\not\equiv 0$ of the differential equation%
\begin{equation}
f^{\prime \prime }+A\left( z\right) f^{\prime }+B\left( z\right) \exp
\left\{ \frac{b}{z^{n}}\right\} f=0,  \label{eqc}
\end{equation}%
satisfies $\sigma _{2}\left( f,0\right) =n.$
\end{corollary}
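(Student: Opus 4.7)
The plan is to reduce the corollary to Corollary~\ref{cor1} with $k=2$, $A_{1}(z)=A(z)$, and $A_{0}(z)=B(z)\exp\{b/z^{n}\}$. To apply that corollary, the only nontrivial input I need is the identity
\[
\sigma\!\left(B(z)\exp\{b/z^{n}\},0\right)=n,
\]
together with the given inequality $\sigma(A,0)<n$, which then yields $\sigma(A,0)<\sigma(A_{0},0)<\infty$, placing us exactly in the hypothesis of Corollary~\ref{cor1} (with its $n$ equal to $1$) and therefore forcing $\sigma_{2}(f,0)=\sigma(A_{0},0)=n$ for every nontrivial solution.

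The heart of the proof is thus the order computation for $A_{0}$. First I would compute the Nevanlinna quantities of $g(z):=\exp\{b/z^{n}\}$ directly from the definitions~(\ref{0d1})--(\ref{0d3}). Since $g$ is analytic in $\mathbb{C}\setminus\{0\}$, $N_{0}(r,g)=0$, so $T_{0}(r,g)=m_{0}(r,g)$. Writing $b=|b|e^{i\beta}$ and $z=re^{i\theta}$, the exponent $\operatorname{Re}(b/z^{n})=|b|r^{-n}\cos(\beta-n\theta)$ is positive on half of each period, which gives the clean evaluation
\[
m_{0}(r,g)=\frac{|b|}{2\pi r^{n}}\int_{0}^{2\pi}\max\!\big(\cos(\beta-n\theta),0\big)\,d\theta=\frac{|b|}{\pi r^{n}}.
\]
In particular $\sigma(g,0)=n$.

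Next I would combine this with the analytic function $B$ using the standard additive properties of the $T_{0}$-characteristic inherited from the usual ones:
\[
T_{0}(r,A_{0})\le T_{0}(r,B)+T_{0}(r,g),\qquad T_{0}(r,A_{0})\ge T_{0}(r,g)-T_{0}(r,1/B),
\]
together with $T_{0}(r,1/B)=T_{0}(r,B)+O(1)$ (Jensen's relation adapted to neighborhoods of the singular point $0$, as set up in the introduction). Because $\sigma(B,0)<n$, for any $\varepsilon>0$ small enough that $\sigma(B,0)+\varepsilon<n$ we have $T_{0}(r,B)=O(r^{-\sigma(B,0)-\varepsilon})$ as $r\to 0^{+}$. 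Since this is $o(r^{-n})$ and $T_{0}(r,g)=|b|/(\pi r^{n})$, both the upper and lower bounds on $T_{0}(r,A_{0})$ are of exact order $r^{-n}$, yielding $\sigma(A_{0},0)=n$ as required.

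The main obstacle I expect is not technical but notational: one has to be careful that the characteristic $T_{0}(r,\cdot)$ near the singular point $0$ satisfies the same elementary additive/multiplicative inequalities and Jensen-type relation $T_{0}(r,1/B)=T_{0}(r,B)+O(1)$ as the classical Nevanlinna characteristic. Once this is verified from the definitions~(\ref{0d1})--(\ref{0d3}) (or cited from \cite{fet,mark}), the computation of $T_{0}(r,\exp\{b/z^{n}\})$ is explicit and the remainder of the argument is a direct invocation of Corollary~\ref{cor1}.
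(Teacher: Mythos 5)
Your reduction to Corollary \ref{cor1} (with $k=2$, $A_1=A$, $A_0=B\exp\{b/z^n\}$, and $n=1$ there) is correct and is exactly how the paper intends this corollary to follow — the paper gives no separate proof of it. Your explicit evaluation $m_0\bigl(r,\exp\{b/z^n\}\bigr)=|b|/(\pi r^n)$ is right, and the order sandwich $T_0(r,g)-T_0(r,1/B)+O(1)\le T_0(r,A_0)\le T_0(r,g)+T_0(r,B)+O(1)$ closes the argument once one grants $T_0(r,1/B)=T_0(r,B)+O(1)$. That Jensen-type relation does hold here, but strictly speaking it needs a word: it is obtained via the Valiron decomposition and the classical Jensen formula applied to the entire function $\tilde{\phi}(1/w)$, rather than ``Jensen near $0$'' verbatim. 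A slightly more economical route, staying entirely within the paper's stated toolkit and avoiding Jensen altogether, is to work with $\sigma_M$ instead of $\sigma_T$ (they agree for analytic functions by Remark \ref{rem1}): the trivial bound $\log M_0(r,A_0)\le \log M_0(r,B)+|b|/r^n$ gives $\sigma_M(A_0,0)\le n$, while Lemma \ref{lem8}(i) applied along any ray with $\delta_b(\varphi)>0$ gives $\log M_0(r,A_0)\ge (1-\varepsilon)\delta_b(\varphi)/r^n$, hence $\sigma_M(A_0,0)\ge n$. Either way $\sigma(A_0,0)=n$ is secured and Corollary \ref{cor1} finishes the proof.
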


\begin{example}
Every solution $f\left( z\right) \not\equiv 0$ of the differential equation%
\begin{equation}
f^{\prime \prime }+\exp \left\{ \frac{1}{\left( 1-z\right) ^{m}}\right\}
f^{\prime }+\exp \left\{ \frac{1}{z^{n}}\right\} f=0,  \label{eqe}
\end{equation}%
satisfies $\sigma _{2}\left( f,0\right) =n,$ where $m$ and $n$ are positive
integers.
\end{example}

Similar equations to (\ref{eqc}) and (\ref{eqe}) with analytic coefficients in the unit disc are investigated in \cite%
{ham12}.

Now, we will study the case when $\sigma \left( A_{j},0\right) =\sigma
\left( A_{0},0\right) $ for some $j\neq 0.$

\begin{theorem}
\label{th2}Let $A_{0}\left( z\right) \not\equiv 0,A_{1}\left( z\right)
,...,A_{k-1}\left( z\right) $ be analytic functions in $D\left( 0,R\right) $
satisfying $0<\sigma \left( A_{j},0\right) \leq \sigma \left( A_{0},0\right)
<\infty $ and 
\begin{equation*}
\max \left\{ \tau _{M}\left( A_{j},0\right) :\sigma \left( A_{j},0\right)
=\sigma \left( A_{0},0\right) \right\} <\tau _{M}\left( A_{0},0\right) \
\left( j=1,...,k-1\right) .
\end{equation*}%
Then, every solution $f\left( z\right) \not\equiv 0$ of (\ref{eq3})
satisfies $\sigma _{2}\left( f,0\right) =\sigma \left( A_{0},0\right) .$
\end{theorem}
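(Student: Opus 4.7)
The plan is to establish $\sigma_2(f,0)=\sigma(A_0,0)$ by proving the two inequalities separately. The upper bound $\sigma_2(f,0)\le\sigma(A_0,0)=:\sigma$ is immediate: since the hypothesis $0<\sigma(A_j,0)\le\sigma$ holds for each $j$, Theorem~\ref{th2b} applied with $n=1$ and $\alpha=\sigma$ delivers $\sigma_2(f,0)\le\sigma$ for every solution of (\ref{eq3}).

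For the matching lower bound I would proceed by contradiction, assuming some nontrivial solution $f$ satisfies $\sigma_2(f,0)=\beta<\sigma$. Rewriting (\ref{eq3}) as
$$-A_0(z)=\frac{f^{(k)}(z)}{f(z)}+\sum_{j=1}^{k-1}A_j(z)\,\frac{f^{(j)}(z)}{f(z)},$$
I would bound each piece of the right-hand side. Corollary~\ref{coro2} applied to $f$ with $n=2$ gives a set $E_1^{\ast}\subset(0,R^{\prime})$ of finite logarithmic measure and a small $\varepsilon_1>0$ such that $|f^{(j)}(z)/f(z)|\le\exp\{r^{-(\beta+\varepsilon_1)}\}$ for every $z$ with $r=|z|\in(0,R^{\prime})\setminus E_1^{\ast}$ and $j=1,\ldots,k$. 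For the coefficients, set $\tau:=\tau_M(A_0,0)$ and $\tau':=\max\{\tau_M(A_j,0):j\ge 1,\ \sigma(A_j,0)=\sigma\}<\tau$; the definitions of order and type give, for any $\varepsilon>0$ and all sufficiently small $r$, $|A_j(z)|\le\exp\{(\tau'+\varepsilon)/r^\sigma\}$ when $\sigma(A_j,0)=\sigma$ and $|A_j(z)|\le\exp\{r^{-(\sigma(A_j,0)+\varepsilon)}\}$ otherwise, both uniformly in $\arg z$.

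To use these bounds I need a sequence $r_n\to 0$ with $r_n\notin E_1^{\ast}$ on which $|A_0|$ attains values close to its maximal type-scale growth. By definition of $\tau_M(A_0,0)$ there is a sequence $s_n\to 0$ with $\log M_0(s_n,A_0)>(\tau-\varepsilon/2)/s_n^\sigma$, and by continuity of $r\mapsto M_0(r,A_0)$ the weaker bound $\log M_0(r,A_0)>(\tau-\varepsilon)/r^\sigma$ persists on small intervals about each $s_n$; the finite logarithmic measure of $E_1^{\ast}$ then permits choosing $r_n$ inside those intervals but outside $E_1^{\ast}$. Picking $z_n$ on $|z|=r_n$ with $|A_0(z_n)|=M_0(r_n,A_0)$ and substituting into the rewritten equation, the resulting chain of inequalities, combined with $\varepsilon,\varepsilon_1$ so small that $\tau'+2\varepsilon<\tau-\varepsilon$ and $\beta+\varepsilon_1<\sigma$, yields the contradiction
$$\exp\{(\tau-\varepsilon)/r_n^\sigma\}\le|A_0(z_n)|\le C\exp\{(\tau'+2\varepsilon)/r_n^\sigma\}$$
as $r_n\to 0$. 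The main obstacle I expect is precisely this selection step: the sequence realizing the type of $A_0$ must be arranged to avoid the exceptional set from the logarithmic derivative estimate. The remaining manipulations — rewriting the equation, substituting the bounds, matching exponents — are routine once that selection is secured.
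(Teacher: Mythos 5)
Your overall architecture matches the paper's — rewrite (\ref{eq3}) to isolate $A_0$, bound the other coefficients by their types, bound the logarithmic derivatives, and pin $|A_0|$ from below along a sequence of radii escaping the exceptional set. But the step you yourself flag as "the main obstacle" is not merely technical; as sketched it has a genuine hole. You propose to take a sequence $s_n\to 0$ realizing the type of $A_0$, argue by continuity of $r\mapsto M_0(r,A_0)$ that the lower bound $\log M_0(r,A_0)>(\tau-\varepsilon)/r^\sigma$ persists on "small intervals" around each $s_n$, and then choose $r_n$ in those intervals but outside $E_1^{\ast}$. Continuity only gives you intervals of unquantified, possibly shrinking length; there is nothing to prevent their union from having \emph{finite} logarithmic measure, in which case $E_1^{\ast}$ could cover it entirely and no admissible $r_n$ would exist. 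What you actually need is that $\{r:\log M_0(r,A_0)>\beta/r^\sigma\}$ contains a set of \emph{infinite} logarithmic measure, and that is exactly Lemma~\ref{lem5} of the paper. Its proof does not rest on continuity at all: it uses the (eventual) monotonicity of $M_0(r,A_0)$ as $r\downarrow 0$, chooses the realizing sequence so that $\frac{m}{m+1}r_m>r_{m+1}$, takes the intervals $\bigl[\frac{m}{m+1}r_m,\,r_m\bigr]$, and observes that $\sum\log\frac{m+1}{m}=\infty$. You should invoke Lemma~\ref{lem5} (or reproduce that construction) rather than appeal to continuity; as written the selection step does not close.

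Two smaller points of comparison. First, the paper does not argue by contradiction and does not need Corollary~\ref{coro2}: it applies Theorem~\ref{th1} directly to get $\bigl|f^{(j)}/f\bigr|\le \frac{C}{r^{2k}}\bigl[T_0(\frac{r}{\alpha},f)\bigr]^{2k}$ off a finite-log-measure set, substitutes into the rearranged equation together with $|A_j(z)|\le\exp\{\mu/r^\sigma\}$ and the Lemma~\ref{lem5} lower bound $|A_0(z)|>\exp\{\nu/r^\sigma\}$ on $F\setminus E_1^{\ast}$, and reads off a lower bound for $T_0(\frac{r}{\alpha},f)$ that forces $\sigma_2(f,0)\ge\sigma$ directly. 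This is cleaner than assuming $\sigma_2(f,0)=\beta<\sigma$ and feeding $\beta$ into Corollary~\ref{coro2}, though your route would also work once the selection is repaired. Second, your upper bound via Theorem~\ref{th2b} is fine, though the paper simply cites Lemma~\ref{lem7} (which is what Theorem~\ref{th2b} rests on) for the same conclusion.
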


The analogous of this result in the complex plane and in the unit disc are investigated in \cite%
{tu, ham}.

\begin{theorem}
\label{th3}Let $a,b\neq 0$ be complex constants such that $\arg a\neq \arg b$
or $a=cb$ $\left( 0<c<1\right) $ and $n$ be a positive integer. Let $A\left(
z\right) ,B\left( z\right) \not\equiv 0$ be analytic functions in $D\left(
0,R\right) $ with $\max \left\{ \sigma \left( A,0\right) ,\sigma \left(
B,0\right) \right\} <n.$ Then, every solution $f\left( z\right) \not\equiv 0$
of the differential equation%
\begin{equation}
f^{\prime \prime }+A\left( z\right) \exp \left\{ \frac{a}{z^{n}}\right\}
f^{\prime }+B\left( z\right) \exp \left\{ \frac{b}{z^{n}}\right\} f=0,
\label{eq4}
\end{equation}%
satisfies $\sigma _{2}\left( f,0\right) =n.$
\end{theorem}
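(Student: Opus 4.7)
The plan is to show both $\sigma_{2}(f,0) \leq n$ and $\sigma_{2}(f,0) \geq n$. The upper bound follows from Theorem \ref{th2b}: the coefficients $A(z)\exp\{a/z^{n}\}$ and $B(z)\exp\{b/z^{n}\}$ are analytic in $D(0,R)$ and have order exactly $n$ at $0$, since the exponential factors have order $n$ by the same computation as in the Example, and multiplication by $A$ or $B$ of strictly smaller order leaves the order unchanged. Applying Theorem \ref{th2b} with iteration index $n=1$ gives $\sigma_{2}(f,0) \leq n$.

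For the lower bound I would argue by contradiction. Assume $\sigma_{2}(f,0) < n$ and rewrite (\ref{eq4}) as
\begin{equation*}
-B(z)\exp\{b/z^{n}\} = \frac{f''(z)}{f(z)} + A(z)\exp\{a/z^{n}\}\,\frac{f'(z)}{f(z)}.
\end{equation*}
Set $\delta_{a}(\theta) = |a|\cos(\arg a - n\theta)$ and $\delta_{b}(\theta) = |b|\cos(\arg b - n\theta)$, so that for $z = re^{i\theta}$ one has $|\exp\{a/z^{n}\}| = \exp(\delta_{a}(\theta)r^{-n})$ and similarly for $b$. The geometric key step is to exhibit an open sector $S \subset [0,2\pi)$ and a constant $\eta > 0$ with $\delta_{b}(\theta) \geq 2\eta$ and $\delta_{b}(\theta) - \delta_{a}(\theta) \geq 2\eta$ for all $\theta \in S$. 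In the case $\arg a \neq \arg b$ we have $b - a \neq 0$, and since $\delta_{b}(\theta) - \delta_{a}(\theta) = \mathrm{Re}((b-a)e^{-in\theta})$ is a nontrivial cosine function of $\theta$, one can choose $S$ near an endpoint of an arc of $\{\delta_{b} > 0\}$ where $\delta_{a}$ is bounded away from $\delta_{b}$ with the right sign. In the case $a = cb$ with $0 < c < 1$ we have $\delta_{a} \equiv c\,\delta_{b}$ identically, so any subarc of $\{\delta_{b} \geq 2\eta/(1-c)\}$ serves.

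Next, apply Corollary \ref{coro2} with iteration index $2$ to obtain a set $E_{2}^{\ast} \subset [0,2\pi)$ of linear measure zero such that for each $\theta \notin E_{2}^{\ast}$ there exists $r_{0}(\theta) > 0$ with $|f^{(j)}(z)/f(z)| \leq \exp(r^{-\sigma_{2}(f,0)-\varepsilon})$ for $j = 1, 2$, $\arg z = \theta$, $|z| = r < r_{0}(\theta)$. Choose $\varepsilon > 0$ small enough that $\sigma_{2}(f,0) + \varepsilon < n$ and $\sigma + \varepsilon < n$, where $\sigma = \max\{\sigma(A,0), \sigma(B,0)\}$, and use the standard upper bounds $|A(z)|, |B(z)| \leq \exp(r^{-\sigma-\varepsilon})$ valid off an exceptional set of $r$. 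Fix $\theta_{0} \in S \setminus E_{2}^{\ast}$, along which we will evaluate the identity above.

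The main obstacle is producing a matching lower bound $|B(re^{i\theta_{0}})| \geq \exp(-r^{-\sigma-\varepsilon})$ along a sequence $r_{k} \to 0^{+}$, since $B$ may have zeros accumulating at the origin. I would derive this by a Nevanlinna-type argument near the singular point: bound $m_{0}(r, 1/B)$ via the first fundamental theorem in the punctured disc (using $\sigma(1/B,0) = \sigma(B,0) < n$), then apply a Borel-type covering argument to pass from the integral mean on circles to a pointwise lower bound off an $r$-set of finite logarithmic measure. Once this bound is in hand, substituting the estimates into the rewritten equation along $\arg z = \theta_{0}$ and dividing by $\exp(\max(\delta_{a}(\theta_{0}),0)\,r^{-n})$ yields an inequality whose left side is at least $\exp(\eta r_{k}^{-n})$ while the right side is at most $\exp(r_{k}^{-\beta})$ for some $\beta < n$. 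This is impossible as $r_{k} \to 0^{+}$, contradicting $\sigma_{2}(f,0) < n$ and forcing $\sigma_{2}(f,0) = n$.
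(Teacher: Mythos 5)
Your upper bound $\sigma_2(f,0)\leq n$ via Theorem \ref{th2b} (equivalently Lemma \ref{lem7}) agrees with the paper, and your idea of working on a sector where $\delta_b>0$ together with a suitable sign or gap condition on $\delta_a$ is the same geometric idea the paper uses in the case $\arg a\neq\arg b$. Unifying the two cases through the inequality $\delta_b(\theta)-\delta_a(\theta)\geq 2\eta$ is a reasonable variant: the paper instead disposes of the case $a=cb$ by a direct appeal to Theorem \ref{th2} (comparing the types $\tau_M(A\exp\{a/z^n\},0)=|a|$ and $\tau_M(B\exp\{b/z^n\},0)=|b|$, with $|a|<|b|$), which is shorter but not essentially different.

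The real gap is in the step you yourself flag as the main obstacle. You want a pointwise lower bound $|B(re^{i\theta_0})|\geq\exp\{-r^{-\sigma-\varepsilon}\}$ along a fixed ray $\theta_0$, and you propose to get it by bounding $m_0(r,1/B)$ and then applying a Borel-type covering argument in $r$. This does not work as stated: $m_0(r,1/B)$ controls the angular \emph{average} of $\log^+(1/|B|)$ over the circle $|z|=r$, and no radial exceptional-set argument converts an average over $\theta$ into a bound at a prescribed $\theta_0$. The circle may well contain zeros of $B$ or points where $|B|$ is astronomically small even though the mean is tame, so at a specific $\theta_0$ the bound can fail along every sequence $r_k\to 0$. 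To make this route rigorous you would need a minimum-modulus-type statement valid off a small exceptional set in $\theta$ as well, which is precisely what the paper's Lemma \ref{lem8} supplies. That lemma does not bound $|B|$ alone; it bounds the full coefficient $B(z)\exp\{b/z^n\}$ from below by $\exp\{(1-\varepsilon)\delta_b(\varphi)r^{-n}\}$ along every ray $\varphi\notin H$, using the Valiron decomposition $B=\tilde\phi\mu$ together with the analogue of Gundersen-type ray estimates for $\tilde\phi(z)\exp\{b/z^n\}$. Because $\sigma(B,0)<n$, the exponential factor swamps any decay of $B$, and the zero-set of $B$ is absorbed into the construction. Replace your Nevanlinna--Borel step by an invocation of Lemma \ref{lem8} on both coefficients, and the rest of your contradiction (divide by the upper bound for $|A\exp\{a/z^n\}|$, compare with the Corollary \ref{coro2} estimate for $|f''/f|$ and $|f'/f|$ using $\sigma_2(f,0)+\varepsilon<n$) goes through and matches the paper's argument.
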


Similar results to Theorem \ref{th3} are established in different situations
in \cite{chen1, ham12, fet}.

\begin{example}
By Theorem \ref{th3}, every solution $f\left( z\right) \not\equiv 0$ of the
differential equation%
\begin{equation*}
f^{\prime \prime }+\exp \left\{ \frac{i}{z\left( z+1\right) }\right\}
f^{\prime }+\exp \left\{ \frac{1}{z\left( z-1\right) ^{2}}\right\} f=0,
\end{equation*}%
satisfies $\sigma _{2}\left( f,0\right) =1$ and $\sigma _{2}\left(
f,1\right) =2.$
\end{example}

\section{Preliminaries lemmas}

To prove these results we need the following lemmas.

\begin{lemma}
\label{lem1}\cite{gund} Let $g$ be a transcendental meromorphic function in $%
\mathbb{C}
$, and let $\alpha >0$ $\varepsilon >0$ be given real constants; then\newline
i) there exists a set $E_{1}\subset \left( 1,\infty \right) $ that has a
finite logarithmic measure and a constant $c>0$ that depends only on $\alpha 
$ such that for all $R=\left\vert w\right\vert $ satisfying $R\notin \left[
0,1\right) \cup E_{1}$, we have%
\begin{equation}
\left\vert \frac{g^{\left( k\right) }\left( w\right) }{g\left( w\right) }%
\right\vert \leq c\left[ T\left( \alpha R,g\right) \frac{\log ^{\alpha
}\left( R\right) }{R}\log T\left( \alpha R,g\right) \right] ^{k};  \label{l1}
\end{equation}%
ii) there exists a set $E_{2}\subset \left[ 0,2\pi \right) $ that has a
linear measure zero such that for all $\theta \in \left[ 0,2\pi \right)
\backslash E_{2}$ there exists a constant $R_{0}=R_{0}\left( \theta \right)
>0$ such that (\ref{l1}) holds for all $z$ satisfying $\arg z\in \left[
0,2\pi \right) \backslash E_{2}$ and $r=\left\vert z\right\vert >R_{0}.$
\end{lemma}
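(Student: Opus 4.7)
The plan is to follow the classical Gol'dberg--Grinshtein/Gundersen route: represent $g^{(k)}/g$ via Poisson--Jensen, bound each piece separately, and build the exceptional set with a Borel-type lemma. Fix $R=|w|$ large and $\beta\in(1,\alpha]$; the Poisson--Jensen formula on the disc $|z|\le\beta R$ expresses $\log g(w)$ as a Poisson integral of $\log|g(\beta Re^{i\varphi})|$ plus Blaschke-type sums over the zeros $\{a_\mu\}$ and poles $\{b_\nu\}$ of $g$ in $|z|<\beta R$. Differentiating $k$ times and expanding $g^{(k)}/g$ as a polynomial in $(\log g)',(\log g)'',\dots$ yields a representation of $g^{(k)}(w)/g(w)$ as (a) iterated derivatives of the Poisson integral, bounded on $|w|=R$ by a constant multiple of $R^{-k}\bigl(m(\beta R,g)+m(\beta R,1/g)\bigr)\le 2R^{-k}T(\beta R,g)+O(R^{-k})$ via Jensen, plus (b) a finite sum of rational pieces $(w-c)^{-j}$ indexed by zeros and poles, of cardinality $O(T(\alpha R,g))$ by the first fundamental theorem.

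To control the discrete sum I would excise around each zero/pole a disc of radius $\delta=T(\alpha R,g)^{-1}$; on the complement each rational piece is $O(\delta^{-j})$, and the grand total comes out of order $(R^{-1}T(\alpha R,g)\log T(\alpha R,g))^k$, while the extra $\log^\alpha R$ factor in (\ref{l1}) is picked up by optimising the gap $\beta-1$ against the size of the Poisson kernel on the thin annulus just outside $|z|=R$. The exceptional set in part~(i) is then handled by the Borel lemma: outside a set $E_1$ of finite logarithmic measure one has $T(\alpha R,g)\le T(R,g)^2$, and the projection onto the radial axis of the union of excised bad discs also has finite logarithmic measure. For part~(ii), a Fubini argument exchanges the radial and angular variables: for all but a linear-measure-zero set of directions $\theta$, the set of bad radii along the ray $\arg z=\theta$ is finite, and taking $r>R_0(\theta)$ past that finite set yields the pointwise estimate.

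The main obstacle is the combinatorial bookkeeping that produces exactly the exponent $\log^\alpha R\cdot\log T(\alpha R,g)$: the $\log^\alpha R$ emerges from the Poisson-kernel derivative estimate in the annulus $\beta\le|z|/R\le\alpha$ after optimising in $\beta$, whereas the $\log T$ comes from iteratively refining the exclusion radius $\delta$ until the rational-term bound matches the integral-term bound. Synchronising these choices, and verifying that the combined exceptional set still has finite logarithmic measure in~(i), respectively linear measure zero in~(ii), is where the proof really earns its keep.
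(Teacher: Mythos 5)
This lemma is not proved in the paper at all: it is quoted verbatim from Gundersen \cite{gund}, so there is no in-paper argument to compare against, and your sketch has to stand on its own as a reconstruction of Gundersen's proof. You do identify the correct skeleton (differentiated Poisson--Jensen representation, an $O\bigl(R^{-k}T(\beta R,g)\bigr)$ bound for the integral term, $O\bigl(T(\alpha R,g)\bigr)$ zeros and poles to handle, excision of small discs, and a measure estimate for the exceptional set), but the two steps you wave at are exactly the ones that fail as written. First, the Borel-type claim that $T(\alpha R,g)\le T(R,g)^{2}$ outside a set of finite logarithmic measure is false for functions of infinite order (take $T(r)=e^{e^{r}}$; then $T(\alpha r)=T(r)^{e^{(\alpha-1)r}}$ for every large $r$), and no such comparison is needed, since the bound (\ref{l1}) is stated directly in terms of $T(\alpha R,g)$. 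Second, and more seriously, the excision radius $\delta=T(\alpha R,g)^{-1}$ does not give an exceptional set under control: the discs depend on the evaluation radius $R$, and even if you fix at each zero or pole $c_{j}$ a disc of radius $1/T(\alpha\left\vert c_{j}\right\vert ,g)$, the logarithmic measure of the radial projection is comparable to $\sum_{j}\left\vert c_{j}\right\vert ^{-1}T(\alpha\left\vert c_{j}\right\vert ,g)^{-1}$, which may diverge because the number of zeros and poles of modulus about $t$ is controlled only by $T$ at a strictly larger radius, and for rapidly growing $g$ that quantity can dwarf $T(\alpha t,g)$. The actual proof takes the excised radius at $c_{j}$ proportional to $\left\vert c_{j}\right\vert$ divided by slowly diverging factors of the type $(\log\left\vert c_{j}\right\vert )^{1+\varepsilon}$ and $T(\alpha\left\vert c_{j}\right\vert ,g)\log^{1+\varepsilon}T(\alpha\left\vert c_{j}\right\vert ,g)$, so that the sum of relative lengths converges by an integral test; the reciprocal of that radius is precisely what produces the factor $\frac{1}{R}T(\alpha R,g)\log^{\alpha}(R)\log T(\alpha R,g)$ in (\ref{l1}). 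Deferring this ``synchronisation'' defers the theorem itself, not bookkeeping.

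Part (ii) is also not a Fubini exchange. The exceptional set there consists of arguments, and one must show that the set of $\theta$ whose ray meets infinitely many excised discs has linear measure zero; this is a Borel--Cantelli argument resting on convergence of the angular sizes $\sum_{j}\delta_{j}/\left\vert c_{j}\right\vert$, hence again on the same careful choice of $\delta_{j}$ that your sketch postpones. A final small point: the argument needs $\alpha>1$ (your own choice $\beta\in(1,\alpha]$ tacitly assumes it, and Gundersen states the result that way), whereas the lemma as reproduced in the paper says $\alpha>0$.
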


\begin{lemma}
\label{lem2}\cite{fet} Let $\phi $ be a non constant meromorphic function in 
$D\left( 0,\infty \right] $ and set $g\left( w\right) =\phi \left( \frac{1}{w%
}\right) $. Then, $g\left( w\right) $ is meromorphic in $%
\mathbb{C}
$ and we have%
\begin{equation*}
T\left( \frac{1}{r},g\right) =T_{0}\left( r,\phi \right) ,
\end{equation*}%
and so $\sigma \left( f,0\right) =\sigma \left( g\right) .$
\end{lemma}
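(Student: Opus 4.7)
The plan is to verify the three assertions in order: meromorphy of $g$ on $\mathbb{C}$, the identity $T(1/r,g)=T_0(r,\phi)$, and the order equality.

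First I would handle the meromorphy. Since $\phi$ is meromorphic on $D(0,\infty]=(\mathbb{C}\setminus\{0\})\cup\{\infty\}$, the inversion $w\mapsto 1/w$ is a biholomorphism of $\mathbb{C}\setminus\{0\}$ onto itself that exchanges $0$ and $\infty$. Composition yields $g(w)=\phi(1/w)$ meromorphic on $\mathbb{C}\setminus\{0\}$, and meromorphy of $\phi$ at $\infty$ transfers to meromorphy of $g$ at $w=0$; in particular $g$ has a pole at $0$ of multiplicity $n(\infty,\phi)$.

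Next I would check the proximity identity. Setting $R=1/r$ and applying the change of variable $\varphi\mapsto -\varphi$ (equivalently reading the integrand as $2\pi$-periodic),
\begin{equation*}
m(1/r,g)=\frac{1}{2\pi}\int_0^{2\pi}\log^+\!|g(\tfrac{1}{r}e^{i\varphi})|\,d\varphi=\frac{1}{2\pi}\int_0^{2\pi}\log^+\!|\phi(re^{-i\varphi})|\,d\varphi=m_0(r,\phi).
\end{equation*}
For the counting functions, note that a pole of $g$ at a point $w_0\neq 0$ with $|w_0|\le 1/r$ corresponds exactly to a pole of $\phi$ at $z_0=1/w_0$ with $|z_0|\ge r$, with matching multiplicities; combined with the pole at $w=0$, this yields $n(s,g)=n(1/s,\phi)+n(\infty,\phi)$ for $s>0$ (where $n(t,\phi)$ is the paper's near-$0$ counting function, which counts poles in $\{|z|\ge t\}\cup\{\infty\}$) and $n(0,g)=n(\infty,\phi)$. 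Substituting $s=1/t$ in
\begin{equation*}
N(1/r,g)=\int_0^{1/r}\frac{n(s,g)-n(0,g)}{s}\,ds+n(0,g)\log(1/r)
\end{equation*}
turns the integral over $s\in(0,1/r)$ into one over $t\in(r,\infty)$ with $ds/s=-dt/t$, yielding
\begin{equation*}
N(1/r,g)=\int_r^{\infty}\frac{n(t,\phi)-n(\infty,\phi)}{t}\,dt-n(\infty,\phi)\log r=N_0(r,\phi),
\end{equation*}
in agreement with definition \eqref{d1}. Adding the two pieces gives $T(1/r,g)=T_0(r,\phi)$.

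Finally the order equality follows by substituting $R=1/r$ in $\sigma(g)=\limsup_{R\to\infty}\log^+T(R,g)/\log R$: since $\log R=-\log r$ and $T(R,g)=T_0(r,\phi)$, the lim-sup as $R\to\infty$ becomes the lim-sup as $r\to 0^+$ of $\log^+T_0(r,\phi)/(-\log r)=\sigma_T(\phi,0)$. The only bookkeeping step requiring care is the contribution of the pole of $g$ at $w=0$ corresponding to $n(\infty,\phi)$; keeping the normalization terms $\pm n(\infty,\phi)\log r$ consistent between the two conventions \eqref{d1} and the classical Nevanlinna $N(R,g)$ is the one place a sign error could easily creep in, so that is the step I would double-check most carefully.
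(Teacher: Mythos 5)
The paper offers no proof of this lemma; it is imported verbatim from \cite{fet}, and the proof there is precisely the inversion $w=1/z$ bookkeeping you carry out, so your route is the intended one and your conclusion is correct. One internal slip is worth fixing: since the paper's $n(t,\phi)$ already counts the pole at $\infty$ (it counts poles in $\left\{ \left\vert z\right\vert \geq t\right\} \cup \left\{ \infty \right\}$ with multiplicity), the correct relation is $n(s,g)=n(1/s,\phi)$ rather than $n(s,g)=n(1/s,\phi)+n(\infty ,\phi)$, which double-counts the pole of $g$ at the origin; with the corrected relation one gets $n(s,g)-n(0,g)=n(1/s,\phi)-n(\infty ,\phi)$, which is exactly what your final display for $N(1/r,g)$ uses, so the identity $T(1/r,g)=T_{0}(r,\phi)$ and the order equality follow as you state.
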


\begin{lemma}
\label{lem5}Let $f$ be a non constant analytic function in $D\left(
0,R\right) $ of finite order $\sigma \left( f,0\right) =\sigma >0$ and a
finite type $\tau \left( f,0\right) =\tau >0$. Then, for any given $0<\beta
<\tau $ there exists a set $F\subset \left( 0,1\right) $ of infinite
logarithmic measure such that for all $r\in F$ we have%
\begin{equation*}
\log M_{0}\left( r,f\right) >\frac{\beta }{r^{\sigma }},
\end{equation*}%
where $M_{0}\left( r,f\right) =\max \left\{ \left\vert f\left( z\right)
\right\vert :\left\vert z\right\vert =r\right\} .$
\end{lemma}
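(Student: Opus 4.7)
The plan is to combine the convexity of $r\mapsto\log M_0(r,f)$ on a logarithmic scale (Hadamard's three-circles theorem applied inside the punctured disc) with the $\limsup$ definition of the type $\tau$, producing a countable disjoint union of short intervals near $0$ whose total logarithmic measure is infinite and on which the desired lower bound holds.

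First I would establish that $\log M_0(r,f)$ is strictly decreasing on some interval $(0,r^{*})$ with $r^{*}\le 1$ and tends to $+\infty$ as $r\to 0^{+}$. By the three-circles theorem applied to sub-annuli of $D(0,R)$, the function $u\mapsto h(u):=\log M_0(e^{u},f)$ is convex on $(-\infty,\log R)$. Since $\tau>0$, the $\limsup$ definition immediately supplies a sequence $r_n\to 0$ with $\log M_0(r_n,f)>\tau/(2r_n^{\sigma})\to+\infty$, so $\limsup_{u\to-\infty}h(u)=+\infty$. A convex function with an unbounded $\limsup$ at $-\infty$ must have non-decreasing subgradient tending to $-\infty$ there, for otherwise the subgradient would be bounded below by some constant and $h$ would be bounded above near $-\infty$, contradicting the unbounded $\limsup$. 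Hence $h$ is strictly decreasing on some half-line $(-\infty,u^{*})$ and tends to $+\infty$; translating back, $\log M_0(r,f)$ is strictly decreasing on $(0,r^{*})$ and $\log M_0(r,f)\to+\infty$ as $r\to 0^{+}$.

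Next, fix $\beta'\in(\beta,\tau)$ and set $\lambda=(\beta'/\beta)^{1/\sigma}>1$. By the definition of $\tau$, there exists a sequence $r_n\in(0,r^{*})$ with $r_n\to 0$ and $r_n^{\sigma}\log M_0(r_n,f)>\beta'$. For any $r\in(r_n/\lambda,r_n]$, monotonicity of $\log M_0(\cdot,f)$ on $(0,r^{*})$ yields
\begin{equation*}
\log M_0(r,f)\ge\log M_0(r_n,f)>\frac{\beta'}{r_n^{\sigma}}>\frac{\beta}{r^{\sigma}},
\end{equation*}
where the last inequality uses $r^{\sigma}>r_n^{\sigma}/\lambda^{\sigma}=r_n^{\sigma}\beta/\beta'$.

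Finally, I would extract a subsequence $r_{n_k}$ satisfying $r_{n_{k+1}}<r_{n_k}/\lambda$, which makes the intervals $I_k=(r_{n_k}/\lambda,r_{n_k}]$ pairwise disjoint; the set $F=\bigcup_k I_k\subset(0,r^{*})\subset(0,1)$ then satisfies $\int_F dt/t=\sum_k\log\lambda=+\infty$ as required. The main obstacle is the monotonicity step: convexity combined with an unbounded $\limsup$ must be exploited carefully to force genuine monotone decrease of $\log M_0(r,f)$ near the singular point, since the three-circles theorem alone only furnishes convexity on compact sub-annuli. Once this is in hand, the remainder of the proof is standard bookkeeping with the $\limsup$ definition and with logarithmic measure.
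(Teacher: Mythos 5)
Your proof is correct and follows essentially the same route as the paper: take a sequence $r_n\to 0$ realizing the type, attach to each $r_n$ a short interval on its left on which the monotone growth of $\log M_{0}\left( r,f\right)$ as $r\to 0$ preserves the lower bound, and observe that the logarithmic lengths of these intervals sum to infinity. The only substantive difference is that you justify the monotonicity of $\log M_{0}\left( r,f\right)$ near the singular point via Hadamard three-circles convexity (a step the paper uses implicitly without comment) and use intervals of fixed logarithmic length $\log \lambda$ with $\lambda =\left( \beta ^{\prime }/\beta \right) ^{1/\sigma }$, whereas the paper uses the intervals $\left[ \frac{m}{m+1}r_{m},r_{m}\right]$ whose lengths $\log \frac{m+1}{m}$ diverge like the harmonic series.
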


\begin{proof}
By the definition of $\tau \left( f,0\right) ,$ there exists a decreasing
sequence $\left\{ r_{m}\right\} \rightarrow 0$ satisfying $\frac{m}{m+1}%
r_{m}>r_{m+1}$ and%
\begin{equation*}
\underset{m\rightarrow \infty }{\lim }r_{m}^{\sigma }\log M_{0}\left(
r_{m},f\right) =\tau .
\end{equation*}%
Then, there exists $m_{0}$ such that for all $m>m_{0}$ and for a given $%
\varepsilon >0$, we have%
\begin{equation}
\log M_{0}\left( r_{m},f\right) >\frac{\tau -\varepsilon }{r_{m}^{\sigma }}.
\label{l8}
\end{equation}%
There exists $m_{1}$ such that for all $m>m_{1}$ and for a given $%
0<\varepsilon <\tau -\beta $, we have%
\begin{equation}
\left( \frac{m}{m+1}\right) ^{\sigma }>\frac{\beta }{\tau -\varepsilon }
\label{9}
\end{equation}%
By (\ref{l8}) and (\ref{9}), for all $m>m_{2}=\max \left\{
m_{0},m_{1}\right\} $ and for any $r\in \left[ \frac{m}{m+1}r_{m},r_{m}%
\right] $, we have%
\begin{equation*}
\log M_{0}\left( r,f\right) >\log M_{0}\left( r_{m},f\right) >\frac{\tau
-\varepsilon }{r_{m}^{\sigma }}>\frac{\tau -\varepsilon }{r^{\sigma }}\left( 
\frac{m}{m+1}\right) ^{\sigma }>\frac{\beta }{r^{\sigma }}.
\end{equation*}%
Set $F=\bigcup\limits_{m=m_{2}}^{\infty }\left[ \frac{m}{m+1}r_{m},r_{m}%
\right] ;$ then we have%
\begin{equation*}
\sum\limits_{m=m_{2}}^{\infty }\int\limits_{\frac{m}{m+1}r_{m}}^{r_{m}}\frac{%
dt}{t}=\sum\limits_{m>m_{2}}\log \frac{m+1}{m}=\infty .
\end{equation*}
\end{proof}

By the same method of the proof of Lemma \ref{lem5}, we can prove the
following lemma.

\begin{lemma}
\label{lem6}Let $f$ be a non constant analytic function in $D\left(
0,R\right) $ of order $\sigma \left( f,0\right) >\alpha >0$. Then there
exists a set $F\subset \left( 0,1\right) $ of infinite logarithmic measure
such that for all $r\in F$ we have%
\begin{equation*}
\log M_{0}\left( r,f\right) >\frac{1}{r^{\alpha }}.
\end{equation*}
\end{lemma}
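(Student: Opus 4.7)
The plan is to imitate the proof of Lemma \ref{lem5} almost verbatim, replacing the finite-type hypothesis at order $\sigma$ by the strict inequality $\sigma > \alpha$. First, I fix an intermediate exponent $\alpha < \alpha' < \sigma$. Using definition (\ref{d5}), I extract a decreasing sequence $r_m \downarrow 0$ along which $\frac{\log \log M_0(r_m, f)}{-\log r_m} \to \sigma$, then thin the sequence to arrange $\frac{m}{m+1} r_m > r_{m+1}$. For $m$ past some threshold $m_0$, the limsup then yields $\log M_0(r_m, f) > r_m^{-\alpha'}$; this plays the role of inequality (\ref{l8}) in the proof of Lemma \ref{lem5}.

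Next, on each short interval $I_m = [\frac{m}{m+1} r_m, r_m]$ I transport this pointwise bound exactly as before. Using the monotonicity of $M_0(\cdot, f)$ near the singularity (the same step used silently in Lemma \ref{lem5}), I obtain $\log M_0(r, f) \geq \log M_0(r_m, f) > r_m^{-\alpha'}$ for every $r \in I_m$. To compare $r_m^{-\alpha'}$ with the desired $r^{-\alpha}$, I use $r \geq \frac{m}{m+1} r_m$, which gives $r^\alpha \geq (\frac{m}{m+1})^\alpha r_m^\alpha$; the comparison then reduces to checking $(\frac{m}{m+1})^\alpha > r_m^{\alpha' - \alpha}$, valid for all sufficiently large $m$ because the left side tends to $1$ while the right side tends to $0$ (since $\alpha' > \alpha$). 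Taking $F = \bigcup_{m \geq m_2} I_m$ and invoking $\sum_m \log \frac{m+1}{m} = \infty$ closes the argument.

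The only real subtlety is the exponent juggling: the gap $\alpha' - \alpha > 0$ is exactly what absorbs the multiplicative loss $(\frac{m+1}{m})^\alpha$ incurred when passing from $r_m$ to the smaller endpoint $\frac{m}{m+1} r_m$. This is the one point where the argument genuinely departs from that of Lemma \ref{lem5}, where the analogous loss was absorbed by the gap $\tau - \beta > 0$. Beyond this small bit of bookkeeping I do not anticipate any serious obstacle.
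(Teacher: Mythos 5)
Your proposal is correct and is exactly what the paper intends, since the paper gives no separate argument for this lemma but simply states that it follows ``by the same method of the proof of Lemma \ref{lem5}''; your adaptation, with the intermediate exponent $\alpha<\alpha'<\sigma$ absorbing the loss $\left(\frac{m+1}{m}\right)^{\alpha}$ in place of the gap $\tau-\beta$, is the natural way to carry that method over, and it relies on the same silent monotonicity of $M_{0}\left( \cdot ,f\right) $ that the paper's own proof of Lemma \ref{lem5} uses.
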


\begin{lemma}
\label{lem6b}\cite[Theorem 8]{hamj} Let $f$ be non constant analytic
function in $\overline{%
\mathbb{C}
}-\left\{ z_{0}\right\} .$ Then, there exists a set $E\subset \left(
0,1\right) $ that has finite logarithmic measure, that is $%
\int\limits_{0}^{1}\frac{\chi _{E}}{t}dt<\infty ,$ such that for all $%
j=0,1,...,k,$ we have%
\begin{equation*}
\frac{f^{\left( j\right) }\left( z_{r}\right) }{f\left( z_{r}\right) }%
=\left( 1+o\left( 1\right) \right) \left( \frac{V_{z_{0}}\left( r\right) }{%
z_{r}-z_{0}}\right) ^{j},
\end{equation*}%
as $r\rightarrow 0,\ r\notin E,$ where $V_{z_{0}}\left( r\right) $ is the
central index of $f$ and $z_{r}$ is a point in the circle $\left\vert
z_{0}-z\right\vert =r$ that satisfies $\left\vert f\left( z_{r}\right)
\right\vert =\underset{\left\vert z_{0}-z\right\vert =r}{\max }\left\vert
f\left( z\right) \right\vert .$
\end{lemma}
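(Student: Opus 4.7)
The plan is to reduce the assertion to the classical Wiman--Valiron theorem on the plane via the inversion $w = 1/(z - z_0)$. Translating if necessary, we may assume $z_0 = 0$, and I would set $g(w) = f(1/w)$. Since $f$ is non-constant and analytic on $\overline{\mathbb{C}} - \{0\}$, the function $g$ is entire: the inversion $w \mapsto 1/w$ is a biholomorphism of $\mathbb{C} \setminus \{0\}$ onto itself (where $f$ is analytic) and sends $w = 0$ to $z = \infty$ (where $f$ is analytic in the sphere sense), so $g$ extends analytically across the origin. It is transcendental because otherwise $f$ would extend meromorphically across $0$, contradicting the hypothesis that $0$ is a genuine singularity of $f$.

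Next I would invoke the classical Wiman--Valiron theorem for $g$: there exists $E' \subset (1, \infty)$ of finite logarithmic measure such that for every $R = |w| \notin E'$ and $i = 0, 1, \ldots, k$,
\begin{equation*}
\frac{g^{(i)}(w_R)}{g(w_R)} = (1 + o(1))\left(\frac{\nu(R,g)}{w_R}\right)^i,
\end{equation*}
where $\nu(R, g)$ is the central index of $g$ and $w_R$ satisfies $|w_R| = R$, $|g(w_R)| = \max_{|w|=R} |g(w)|$. Under $r = 1/R$ the set $E = \{1/R : R \in E'\} \subset (0, 1)$ has finite logarithmic measure (since $dr/r = -dR/R$), and the bijection $z \leftrightarrow 1/z$ between the circles $|z| = r$ and $|w| = R$ preserves $|f(z)| = |g(1/z)|$, so $z_r := 1/w_R$ attains $\max_{|z|=r}|f(z)|$.

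The last step is to convert the estimate from $g$ to $f$. Applying Fa\`a di Bruno with $\phi(z) = 1/z$ (so $\phi^{(j)}(z)/j! = (-1)^j z^{-j-1}$ and the constraint $\sum j m_j = n$ forces the total $z$-power in each partition term to be exactly $-(n+i)$, where $i = \sum m_j$), an easy induction yields positive integers $c_{n,i}$ with $c_{n,n} = 1$ such that
\begin{equation*}
\frac{f^{(n)}(z)}{f(z)} = (-1)^n \sum_{i=1}^{n} c_{n,i}\, z^{-(n+i)}\,\frac{g^{(i)}(1/z)}{g(1/z)}.
\end{equation*}
Evaluating at $z_r$ with $w_R = 1/z_r$ and substituting the Wiman--Valiron estimate gives the $i$-th summand as $c_{n,i}(1 + o(1))\nu(R,g)^i z_r^{-n}$. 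Since $g$ is transcendental, $\nu(R, g) \to \infty$ as $R \to \infty$, so only the $i = n$ term survives modulo $(1 + o(1))$. Defining the central index of $f$ near $0$ by $V_0(r) = -\nu(1/r, g)$ (the natural choice, agreeing with the maximum-term index of the Laurent expansion of $f$ at the essential singularity) recovers the claimed formula, with $z_r - z_0 = z_r$.

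The main obstacle is bookkeeping rather than analysis: verifying the Fa\`a di Bruno expansion in the form above, confirming that $V_0$ so defined is the object intended by the statement, and observing that the lower-order terms $i < n$ contribute $O(\nu(R,g)^{-1})$ relative to the leading term and are therefore absorbed into the $(1 + o(1))$ factor. No analytic input is needed beyond the classical Wiman--Valiron theorem in the plane and the straightforward passage of the finite-logarithmic-measure exceptional set through the inversion $r = 1/R$.
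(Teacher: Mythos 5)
The paper does not prove this lemma; it is quoted verbatim from \cite[Theorem 8]{hamj}, and your reduction to the classical Wiman--Valiron theorem via the inversion $w=1/(z-z_{0})$ is exactly the standard argument behind that result (the paper itself performs the same substitution, in the form $g(w)=f(z_{0}-\frac{1}{w})$, in its Lemma \ref{lem7b}). Your Fa\`{a} di Bruno bookkeeping checks out: the leading term of $f^{(n)}/f$ is $(-1)^{n}z^{-2n}g^{(n)}(1/z)/g(1/z)$ and each lower-order term is smaller by a factor $O\left( \nu (R,g)^{-1}\right) $, so the asymptotic follows once $g$ is transcendental --- which does require reading the hypothesis as saying that $z_{0}$ is an essential singularity of $f$ (for $f$ rational, i.e.\ $g$ a polynomial of degree $\geq 2$, the formula fails for $j\geq 2$); that reading is the one intended throughout the paper. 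The one slip is your identification of the central index: the maximum-term index of the Laurent expansion of $f$ in powers of $(z-z_{0})^{-1}$ is $+\nu (1/r,g)$, not $-\nu (1/r,g)$, so with the conventional (positive) central index the formula carries an extra factor $(-1)^{j}$; this sign ambiguity is present in the source statement as well and is harmless in every application in this paper, since only the moduli $\left\vert f^{(j)}/f\right\vert $ are ever used.
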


\begin{lemma}
\label{lem7b}Let $f$ be a non constant analytic function in $\overline{%
\mathbb{C}
}-\left\{ z_{0}\right\} $ of iterated order $\sigma _{n}\left(
f,z_{0}\right) =\sigma \ \left( n\geq 2\right) $, and let $V_{z_{0}}\left(
r\right) $ be the central index of $f.$ Then%
\begin{equation}
\underset{r\rightarrow 0}{\lim \sup }\frac{\log _{n}^{+}V_{z_{0}}\left(
r\right) }{-\log r}=\sigma .  \label{700}
\end{equation}
\end{lemma}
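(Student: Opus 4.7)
The plan is to reduce the statement to the classical Wiman--Valiron iterated order formula for entire functions via the change of variable $w=1/(z-z_{0})$. Without loss of generality take $z_{0}=0$.

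First, I would transfer the problem to an entire function by setting $g(w)=f(1/w)$. Since $f$ is analytic in $\overline{\mathbb{C}}\setminus\{0\}$, in particular at $z=\infty$, the function $g$ extends to an entire function. By Lemma \ref{lem2} we have $T(1/r,g)=T_{0}(r,f)$, and Remark \ref{rem1} (combined with the standard equivalence $T(R,g)\leq\log^{+}M(R,g)\leq 3\,T(2R,g)$ for entire $g$) gives $\sigma_{n}(g)=\sigma_{n}(f,0)=\sigma$.

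Second, I would identify the central indices. Because $f$ is analytic at $\infty$, its Laurent expansion about $0$ contains no positive powers of $z$, namely $f(z)=\sum_{n\geq 0}b_{n}z^{-n}$. The substitution $w=1/z$ turns this into the Taylor series $g(w)=\sum_{n\geq 0}b_{n}w^{n}$. Since $|b_{n}|R^{n}=|b_{n}|r^{-n}$ with $R=1/r$, the maximum term and the index realizing it coincide for the two series, so $V_{0}(r)=\nu(1/r,g)$, where $\nu(R,g)$ denotes the classical central index of the entire function $g$.

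Third, I would invoke the standard Wiman--Valiron iterated order formula: for an entire function $g$ of iterated order $\sigma_{n}(g)=\sigma$, one has $\limsup_{R\to\infty}\log_{n}^{+}\nu(R,g)/\log R=\sigma$. Substituting $R=1/r$ and using $V_{0}(r)=\nu(1/r,g)$ yields \eqref{700}. The general case of arbitrary $z_{0}$ follows via the translation $w=z-z_{0}$.

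The main obstacle is Step~2, the identification $V_{0}(r)=\nu(1/r,g)$: one must check that the central index $V_{z_{0}}(r)$ in the sense of \cite{hamj} is precisely what one gets by declaring the Laurent coefficients $b_{n}$ of $f$ at $z_{0}$ to play the role of Taylor coefficients of $g$ at the origin. If the definition adopted in \cite{hamj} is this natural one, the identification is tautological. Otherwise, it can be replaced by the weaker asymptotic $V_{0}(r)=(1+o(1))\nu(1/r,g)$ outside a set of finite logarithmic measure, obtained by applying Lemma \ref{lem6b} with $j=1$ together with the classical Wiman--Valiron formula $g'(w_{R})/g(w_{R})=(1+o(1))\nu(R,g)/w_{R}$ at a maximum modulus point $w_{R}=1/z_{r}$ and using $g'(w)/g(w)=-z^{2}f'(z)/f(z)$; this weaker asymptotic is still strong enough to transfer the iterated order estimate.
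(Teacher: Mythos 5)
Your proposal is correct and follows essentially the same route as the paper: transform $f$ into an entire function $g(w)=f(z_{0}-1/w)$ (the paper's sign convention; yours $f(1/w)$ is immaterial), identify $V_{z_{0}}(r)$ with the classical central index of $g$ at $R=1/r$, and invoke the iterated-order central-index formula for entire functions (the paper cites \cite[Lemma 2]{chen3} for this), then substitute $R=1/r$. Your extra caution in Step~2 is unnecessary here, since the central index near the singular point in \cite{hamj} is defined exactly through this correspondence, so the identification is as tautological as you hoped.
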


\begin{proof}
Set $g\left( w\right) =f\left( z_{0}-\frac{1}{w}\right) .$ Then $g\left(
w\right) $ is entire function of iterated order $\sigma _{n}\left( g\right)
=\sigma _{n}\left( f,z_{0}\right) =\sigma $, and if $V\left( R\right) $
denotes the central index of $g,$ then $V_{z_{0}}\left( r\right) =V\left(
R\right) $ with $R=\frac{1}{r}.$ From \cite[Lemma 2]{chen3}, we have 
\begin{equation}
\underset{R\rightarrow +\infty }{\lim \sup }\frac{\log _{n}^{+}V\left(
R\right) }{\log R}=\sigma .  \label{710}
\end{equation}%
Substituting $\ R$ by $\frac{1}{r}$ in (\ref{710}), we get (\ref{700}).
\end{proof}

\begin{lemma}
\label{lem7}Let $A_{j}\left( z\right) \ \left( j=0,...,k-1\right) $ be
analytic functions in $D\left( 0,R\right) $ such that $0$ is a singular
point for at least one of the coefficients $A_{j}\left( z\right) $ and $%
\sigma _{n}\left( A_{j},0\right) \leq \alpha <\infty $. If $f$ is a solution
of the differential equation 
\begin{equation}
f^{\left( k\right) }+A_{k-1}\left( z\right) f^{\left( k-1\right)
}+...+A_{1}\left( z\right) f^{\prime }+A_{0}\left( z\right) f=0,  \label{80}
\end{equation}%
then $\sigma _{n+1}\left( f,0\right) \leq \alpha .$
\end{lemma}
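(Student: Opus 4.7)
The plan is to prove $\sigma_{n+1}(f,0)\leq\alpha$ by a Wiman--Valiron argument at the singular point $0$, controlling the central index $V_{0}(r)$ of $f$ and then invoking Lemma~\ref{lem7b}. I fix an arbitrary nontrivial solution $f$ of (\ref{80}). Via the Valiron decomposition $f=\tilde{\phi}\cdot\mu$ of Remark~\ref{rem1}, with $\tilde{\phi}$ analytic on $D(0,\infty]$ and $\mu$ analytic and non-vanishing near $0$, Lemma~\ref{lem6b} supplies a set $E\subset(0,1)$ of finite logarithmic measure and, for each $r\in(0,r_{0})\setminus E$, a point $z_{r}$ with $|z_{r}|=r$ and $|f(z_{r})|=M_{0}(r,f)$ for which
\begin{equation*}
\frac{f^{(j)}(z_{r})}{f(z_{r})}=(1+o(1))\left(\frac{V_{0}(r)}{z_{r}}\right)^{j},\qquad j=0,1,\dots,k,
\end{equation*}
as $r\to 0$, $r\notin E$.

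I then divide (\ref{80}) by $f$, evaluate at $z_{r}$, take moduli, and insert this asymptotic. Using $|z_{r}|=r$ and assuming (without loss of generality) that $V_{0}(r)\to\infty$, I obtain
\begin{equation*}
\frac{V_{0}(r)^{k}}{r^{k}}(1+o(1))\leq\sum_{j=0}^{k-1}|A_{j}(z_{r})|\,\frac{V_{0}(r)^{j}}{r^{j}}.
\end{equation*}
Choosing $j^{*}\in\{0,\dots,k-1\}$ to maximise the right-hand side yields $V_{0}(r)^{k-j^{*}}\leq 2k\,|A_{j^{*}}(z_{r})|\,r^{k-j^{*}}$, and since $k-j^{*}\geq 1$,
\begin{equation*}
V_{0}(r)\leq C\max_{0\leq j\leq k-1}|A_{j}(z_{r})|
\end{equation*}
for a constant $C$ and all small $r\notin E$. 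The hypothesis $\sigma_{n}(A_{j},0)\leq\alpha$ then gives, for every $\varepsilon>0$ and all sufficiently small $r$,
\begin{equation*}
|A_{j}(z_{r})|\leq M_{0}(r,A_{j})\leq\exp_{n}\!\left\{\frac{1}{r^{\alpha+\varepsilon}}\right\},
\end{equation*}
so $V_{0}(r)\leq\exp_{n}\{r^{-(\alpha+\varepsilon)}\}$ on $(0,r_{0})\setminus E$. Taking $\log_{n+1}^{+}$, dividing by $-\log r$, and passing to $\limsup_{r\to 0}$ (deleting a set of finite logarithmic measure does not affect this upper limit), Lemma~\ref{lem7b} delivers $\sigma_{n+1}(f,0)\leq\alpha+\varepsilon$; letting $\varepsilon\downarrow 0$ finishes the proof.

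The principal obstacle is the justification of the first step: Lemma~\ref{lem6b} requires $f$ analytic in $\overline{\mathbb{C}}\setminus\{0\}$, whereas a solution of (\ref{80}) is only guaranteed to be analytic in a punctured neighbourhood of $0$. The Valiron decomposition reduces the matter to applying Lemma~\ref{lem6b} to $\tilde{\phi}$, which is analytic on $D(0,\infty]$ and has the same iterated order at $0$ by Remark~\ref{rem1}; one has to verify carefully that reintroducing $\mu$ perturbs the logarithmic-derivative asymptotic only by a factor absorbed into $1+o(1)$, which works because $\mu$ and its derivatives are bounded and $\mu$ is bounded away from $0$ in a neighbourhood of $0$. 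Once this reduction is in place, the remainder of the argument is a direct transposition of the classical entire-coefficient Wiman--Valiron proof to the singular-point setting.
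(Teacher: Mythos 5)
Your proposal is correct and takes essentially the same route as the paper: both reduce $f$ to $\tilde{\phi}$ via Valiron's decomposition so that Lemma~\ref{lem6b} applies, transfer the Wiman--Valiron asymptotic from $\tilde{\phi}$ to $f$ using the Leibniz expansion of $f^{(j)}/f$ together with the boundedness of $\mu^{(i)}/\mu$ near $0$, substitute into the equation to obtain a bound of the form $V_{0}(r)\leq C\,r^{k}\max_{j}M_{0}(r,A_{j})$ outside a set of finite logarithmic measure, and then pass to the iterated order via Lemma~\ref{lem7b}. The ``principal obstacle'' you flag is exactly what the paper handles with its identity (\ref{73}), and your explicit invocation of Lemma~\ref{lem7b} plus the $\varepsilon\downarrow 0$ step merely makes visible what the paper leaves implicit in its final line (which, incidentally, should read $\sigma_{n+1}(f,0)\leq\alpha$ rather than $\sigma_{2}$). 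One small precision worth adding: the point $z_{r}$ furnished by Lemma~\ref{lem6b} is a maximum-modulus point of $\tilde{\phi}$ rather than of $f$; this does not affect the argument, since all that is needed is a single point on each circle $|z|=r$ at which the asymptotic holds before plugging into (\ref{80}).
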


\begin{proof}
Let $f\not\equiv 0$ be a solution of (\ref{80}). It is clear that $f$ is
analytic in $D\left( 0,R\right) .$ Let $f\left( z\right) =z^{m}\phi \left(
z\right) \mu \left( z\right) $ be a Valiron's decomposition and set $\tilde{%
\phi}\left( z\right) =z^{m}\phi \left( z\right) .$ By Valiron's
decomposition lemma and since $f\left( z\right) $ is analytic function in $%
D\left( 0,R\right) $, $\tilde{\phi}\left( z\right) $ is analytic in $D\left(
0,\infty \right] .$ By Lemma \ref{lem6b}, there exists a set $E\subset
\left( 0,1\right) $ that has finite logarithmic measure, such that for all $%
j=0,1,...,k,$ we have%
\begin{equation}
\frac{\tilde{\phi}^{\left( j\right) }\left( z_{r}\right) }{\tilde{\phi}%
\left( z_{r}\right) }=\left( 1+o\left( 1\right) \right) \left( \frac{%
V_{0}\left( r\right) }{z_{r}}\right) ^{j},  \label{70}
\end{equation}%
as $r\rightarrow 0,\ r\notin E,$ where $V_{0}\left( r\right) $ is the
central index of $f$ near the singular point $0,$ $z_{r}$ is a point in the
circle $\left\vert z\right\vert =r$ that satisfies $\left\vert f\left(
z_{r}\right) \right\vert =\underset{\left\vert z\right\vert =r}{\max }%
\left\vert f\left( z\right) \right\vert .$ Since $\mu \left( z\right) $ is
analytic and non zero in $D\left( R^{\prime }\right) ,$ we have%
\begin{equation}
\left\vert \frac{\mu ^{\left( j\right) }\left( z\right) }{\mu \left(
z\right) }\right\vert \leq M,\ \left( j\in 
\mathbb{N}
\right) .  \label{71}
\end{equation}%
Set $M_{0}\left( r\right) =\underset{\left\vert z\right\vert =r}{\max }%
\left\{ \left\vert A_{j}\left( z\right) \right\vert :j=0,1,...,k-1\right\} .$
From (\ref{80}), we can write%
\begin{equation}
\frac{f^{(k)}}{f}+A_{k-1}(z)\frac{f^{(k-1)}}{f}+\dots +A_{1}(z)\frac{%
f^{\prime }}{f}+A_{0}(z)=0.  \label{72}
\end{equation}%
We have $f\left( z\right) =\tilde{\phi}\left( z\right) \mu \left( z\right) $%
, and then%
\begin{equation}
\frac{f^{\left( j\right) }\left( z\right) }{f\left( z\right) }%
=\sum\limits_{i=0}^{i=j}C_{j}^{i}\frac{\tilde{\phi}^{\left( j-i\right)
}\left( z\right) }{\tilde{\phi}\left( z\right) }\frac{\mu ^{\left( i\right)
}\left( z\right) }{\mu \left( z\right) },\ j=1,...,k,  \label{73}
\end{equation}%
where $C_{k}^{j}=\frac{k!}{j!\left( k-j\right) !}$ is the binomial
coefficient. By combining (\ref{70}), (\ref{71}) and (\ref{73}) in (\ref{72}%
), we get%
\begin{equation*}
\left( V_{0}\left( r\right) \right) ^{k}\leq Cr^{k}\left( V_{0}\left(
r\right) \right) ^{k-1}M_{0}\left( r\right) ;
\end{equation*}%
where $r$ near enough to $0$ and $C>0,$ and then%
\begin{equation}
V_{0}\left( r\right) \leq Cr^{k}M_{0}\left( r\right) .  \label{74}
\end{equation}%
By (\ref{74}),\ we obtain $\sigma _{2}\left( f,0\right) \leq \alpha .$
\end{proof}

\begin{lemma}
\label{lem8}Let $A\left( z\right) $ be a non constant analytic function in $%
D\left( 0,R\right) $ with $\sigma \left( A,0\right) <n.$ and let $g\left(
z\right) =A\left( z\right) \exp \left\{ \dfrac{a}{z^{n}}\right\} ,\ $($n\geq
1\ $is an integer) $,a=\alpha +i\beta \neq 0,\ z=re^{i\varphi },\ \delta
_{a}\left( \varphi \right) =\alpha \cos \left( n\varphi \right) +\beta \sin
\left( n\varphi \right) ,$ and $H=\left\{ \varphi \in \left[ 0,2\pi \right)
:\delta _{a}\left( \varphi \right) =0\right\} ,\ $(obviously, $H$ is of
linear measure zero). Then for any\textit{\ given }$\varepsilon >0$ and for
any\textit{\ }$\varphi \in \left[ 0,2\pi \right) \backslash H,$\textit{\
there exists }$r_{0}>0$\textit{\ such that }$0<r<r_{0},$\textit{\ }we have%
\newline
(i) \textit{If }$\delta _{a}\left( \varphi \right) >0,$\textit{\ then}%
\begin{equation}
\exp \left\{ \left( 1-\varepsilon \right) \delta _{a}\left( \varphi \right) 
\frac{1}{r^{n}}\right\} \leq \left\vert g\left( z\right) \right\vert \leq
\exp \left\{ \left( 1+\varepsilon \right) \delta _{a}\left( \varphi \right) 
\frac{1}{r^{n}}\right\} ,  \label{e1}
\end{equation}%
(ii)\textit{\ If }$\delta _{a}\left( \varphi \right) <0,$\textit{\ then}%
\begin{equation}
\exp \left\{ \left( 1+\varepsilon \right) \delta _{a}\left( \varphi \right) 
\frac{1}{r^{n}}\right\} \leq \left\vert g\left( z\right) \right\vert \leq
\exp \left\{ \left( 1-\varepsilon \right) \delta _{a}\left( \varphi \right) 
\frac{1}{r^{n}}\right\} .  \label{e2}
\end{equation}
\end{lemma}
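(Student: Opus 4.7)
The plan is to compute the modulus of $\exp\{a/z^{n}\}$ exactly and then absorb the factor $A(z)$ into an $\varepsilon$-error term, using that its order near $0$ is strictly less than $n$. Writing $z = re^{i\varphi}$, I would expand
\[
\frac{a}{z^{n}} = \frac{\alpha+i\beta}{r^{n}}\bigl(\cos n\varphi - i\sin n\varphi\bigr)
= \frac{1}{r^{n}}\bigl[\delta_a(\varphi) + i(\beta\cos n\varphi - \alpha\sin n\varphi)\bigr],
\]
so that $\operatorname{Re}(a/z^{n}) = \delta_a(\varphi)/r^{n}$ and hence $\bigl|\exp\{a/z^{n}\}\bigr| = \exp\{\delta_a(\varphi)/r^{n}\}$.

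Next I would fix $\eta$ with $\sigma(A,0) < \eta < n$ and establish two-sided estimates $\exp\{-r^{-\eta}\} \le |A(z)| \le \exp\{r^{-\eta}\}$ for $r$ near $0$. The upper bound is immediate from the definition (\ref{d5}) of $\sigma_M(A,0)$ together with Remark \ref{rem1}, which ensures $\sigma_M(A,0)=\sigma_T(A,0)=\sigma(A,0)$ for analytic $A$. For the lower bound I would pass through the substitution $h(w)=A(1/w)$ of Lemma \ref{lem2}; then $h$ is meromorphic in $\mathbb{C}$ with $\sigma(h)=\sigma(A,0)<\eta$, and the standard minimum-modulus estimate for meromorphic functions of finite order yields $\log|h(w)| \ge -|w|^{\eta}$, translating back to $\log|A(z)|\ge -r^{-\eta}$ for $r$ small, outside a possible small exceptional set.

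Combining these two ingredients gives
\[
\exp\!\left\{\frac{\delta_a(\varphi)}{r^{n}} - r^{-\eta}\right\}
\le |g(z)| \le
\exp\!\left\{\frac{\delta_a(\varphi)}{r^{n}} + r^{-\eta}\right\}.
\]
Since $\eta < n$, for any prescribed $\varepsilon>0$ and any $\varphi\notin H$ (so that $\delta_a(\varphi)\neq 0$), the inequality $r^{-\eta} \le \varepsilon\,|\delta_a(\varphi)|\,r^{-n}$ holds once $r<r_0(\varphi,\varepsilon)$. Substituting into the displayed bounds and splitting according to the sign of $\delta_a(\varphi)$ then produces (\ref{e1}) when $\delta_a(\varphi)>0$ and (\ref{e2}) when $\delta_a(\varphi)<0$.

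The only delicate point, and hence the main obstacle, is the pointwise lower bound on $|A(z)|$ along the fixed ray $\arg z=\varphi$ for \emph{all} sufficiently small $r$, not merely for $r$ outside an exceptional set, since $A$ may carry an essential singularity at $0$ and zeros of $A(re^{i\varphi})$ could a priori accumulate at $r=0$. The freedom in the statement to let $r_{0}=r_{0}(\varphi)$ should accommodate this: the minimum-modulus estimate for $h(w)=A(1/w)$ supplies the required lower bound off a set of finite logarithmic measure, and shrinking $r_{0}$ further if necessary eliminates any remaining obstruction along the chosen ray.
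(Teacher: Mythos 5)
Your proposal is close in spirit to the paper's argument but has a structural gap at the decisive step. The paper first applies Valiron's decomposition (Lemma \ref{val}) to write $A(z) = z^{m}\phi(z)\mu(z) = \tilde{\phi}(z)\mu(z)$, where $\tilde{\phi}(z)=z^{m}\phi(z)$ is analytic in $D(0,\infty]$ (so that the change of variable $w=1/z$ yields a function defined on all of $\mathbb{C}$) and $\mu$ is analytic and nonzero in $D(R')$, hence $0<c_{1}\le|\mu(z)|\le c_{2}$ near $0$. It then applies \cite[Lemma~2.9]{fet} to $\tilde{\phi}$ and absorbs $\mu$ via that two-sided bound. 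You instead set $h(w)=A(1/w)$ and invoke Lemma \ref{lem2}; but $A$ is analytic only in the punctured disc $D(0,R)$, so $h(w)$ is defined only for $|w|>1/R$, and Lemma \ref{lem2} — whose hypothesis requires the function to be meromorphic in $D(0,\infty]$ — does not apply to $A$ directly. That is exactly what Valiron's factorization buys: it extracts the piece $\tilde{\phi}$ for which the $w=1/z$ substitution is legitimate and isolates the remainder as a bounded, nonvanishing factor near $0$. Note that your upper bound already tacitly relies on Remark \ref{rem1}, which itself presupposes the decomposition, while your lower bound bypasses it; this should be made uniform by performing the decomposition first.

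The exceptional-set issue you flag at the end is a genuine one, and waving it away by ``shrinking $r_{0}$ further'' does not work: a minimum-modulus estimate for $h$ (or for $\tilde{\phi}$ after decomposition) gives a lower bound only outside a set of $r$ of finite logarithmic measure, and such a set can accumulate at $0$, so no choice of $r_{0}$ removes it along a fixed ray. The paper's own proof does not resolve this visibly either — it outsources the entire content to \cite[Lemma~2.9]{fet}. A self-contained argument would need to supply whatever mechanism that cited lemma uses (for instance, tying the excluded set of directions $\varphi$ to the zero set of $\tilde{\phi}$, not merely to $H$); otherwise the honest route is to cite that lemma, as the paper does, after first carrying out the Valiron decomposition.
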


\begin{proof}
Let $A\left( z\right) =z^{m}\phi \left( z\right) \mu \left( z\right) $ be a
Valiron's decomposition and set $\tilde{\phi}\left( z\right) =z^{m}\phi
\left( z\right) .$ By Valiron's decomposition lemma and since $A\left(
z\right) $ is analytic function in $D\left( 0,R\right) $, $\tilde{\phi}%
\left( z\right) $ is analytic in $D\left( 0,\infty \right] .$ By Remark \ref%
{rem1}, $\sigma \left( \tilde{\phi},0\right) =\sigma \left( A,0\right) <n.$
Since $\mu \left( z\right) $ analytic and nonzero in $D\left( R^{\prime
}\right) ,$ we have%
\begin{equation}
0<c_{1}\leq \left\vert \mu \left( z\right) \right\vert \leq c_{2}\text{ as }r%
\text{ is near enough to }0.  \label{e3}
\end{equation}%
By applying \cite[Lemma 2.9]{fet} for $\tilde{\phi}\left( z\right) $, and (%
\ref{e3}), we get (\ref{e1}) and (\ref{e2}).
\end{proof}

Now, we give the standard order reduction procedure of linear differential
equations which is an adaptation of \cite[Lemma 6.4]{4}.

\begin{lemma}
\label{lem11}Let $f_{0,1},f_{0,2},...,f_{0,m}$ be $m\ \left( m\geq 2\right) $
linearly independent meromorphic (in $D\left( 0,R\right) $) solutions of an
equation of the form%
\begin{equation}
y^{\left( k\right) }+A_{0,k-1}\left( z\right) y^{\left( k-1\right)
}+...+A_{0,0}\left( z\right) y=0,\ k\geq m,  \label{5.3}
\end{equation}%
where $A_{0,0}\left( z\right) ,...,A_{0,k-1}\left( z\right) $ are
meromorphic functions in $D\left( 0,R\right) .$ For $1\leq q\leq m-1,$ set%
\begin{equation}
f_{q,j}=\left( \frac{f_{q-1,j+1}}{f_{q-1,1}}\right) ^{\prime },\
j=1,2,...,m-q.  \label{5.4}
\end{equation}%
Then, $f_{q,1},f_{q,2},...,f_{q,m-q}$ are $m-q$ linearly independent
meromorphic (in $D\left( 0,R\right) $) solutions of the equation%
\begin{equation}
y^{\left( k-q\right) }+A_{q,k-q-1}\left( z\right) y^{\left( k-q-1\right)
}+...+A_{q,0}\left( z\right) y=0,  \label{5.5}
\end{equation}%
where%
\begin{equation}
A_{q,j}\left( z\right) =\sum\limits_{i=j+1}^{k-q-1}\left( 
\begin{array}{c}
i \\ 
j+1%
\end{array}%
\right) A_{q-1,j}\left( z\right) \frac{f_{q-1,1}^{\left( i-j-1\right)
}\left( z\right) }{f_{q-1,1}\left( z\right) }  \label{5.6}
\end{equation}%
for $j=0,1,...,k-q-1.$ Here we set $A_{i,k-i}\left( z\right) \equiv 1$ for
all $i=0,1,...,q.$ \newline
Moreover, let $\varepsilon >0$ and suppose for each $j\in \left\{
0,1,...,k-1\right\} $, there exists a real number $\alpha _{j}$ such that%
\begin{equation}
\left\vert A_{0,j}\left( z\right) \right\vert \leq \exp \left\{ \frac{1}{%
r^{\alpha _{j}+\varepsilon }}\right\} ,\ r=\left\vert z\right\vert \notin E.
\label{5.7}
\end{equation}%
Suppose further that each $f_{0,j}$ is of finite hyper-order $\sigma
_{2}\left( f_{0,j},0\right) .$ Set $\beta =\underset{1\leq j\leq m}{\max }%
\left\{ \sigma _{2}\left( f_{0,j},0\right) \right\} $ and $\tau _{p}=%
\underset{p\leq j\leq k-1}{\max }\left\{ \alpha _{j}\right\} $. Then for any
given $\varepsilon >0,$ we have%
\begin{equation}
\left\vert A_{q,j}\left( z\right) \right\vert \leq \exp \left\{ \frac{1}{%
r^{\max \left\{ \tau _{q+j},\beta \right\} +\varepsilon }}\right\} ,\
r=\left\vert z\right\vert \notin E,  \label{5.8}
\end{equation}%
for $j=0,1,...,k-q-1.$
\end{lemma}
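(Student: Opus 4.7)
The plan is to proceed in two stages, corresponding to the two assertions of the lemma.

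\textbf{Stage 1: The recursion and the reduced equations.} I would prove the first part by induction on $q$. For $q=1$, write any solution $y$ of (\ref{5.3}) in the form $y = f_{0,1}\, g$, where $g$ is to be determined. Applying the Leibniz rule,
\begin{equation*}
y^{(i)} = \sum_{\ell=0}^{i} \binom{i}{\ell} f_{0,1}^{(\ell)} g^{(i-\ell)},
\end{equation*}
and substituting into (\ref{5.3}), then grouping terms by the derivative $g^{(j)}$, one obtains an equation of the form
\begin{equation*}
f_{0,1}\, g^{(k)} + \sum_{j=0}^{k-1} B_j(z)\, g^{(j)} = 0.
\end{equation*}
Since $g \equiv 1$ corresponds to $y = f_{0,1}$, which is already a solution, the coefficient $B_0$ must vanish identically. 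Dividing through by $f_{0,1}$ and setting $h = g'$ yields a $(k-1)$-th order equation in $h$ whose coefficients match (\ref{5.6}) with $q=1$, and the $m-1$ functions $f_{1,j}=(f_{0,j+1}/f_{0,1})'$ are precisely solutions; their linear independence follows from that of the $f_{0,j}$ via a routine Wronskian argument. The inductive step for $q\geq 2$ is just the same reduction applied to equation (\ref{5.5}) at level $q-1$ with $f_{q-1,1}$ playing the role of $f_{0,1}$; this is the content of (\ref{5.4}).

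\textbf{Stage 2: The coefficient estimates.} I would prove (\ref{5.8}) by induction on $q$. The base case $q=0$ follows from (\ref{5.7}) combined with the trivial inequality $\alpha_j \le \tau_j \le \max\{\tau_j,\beta\}$. For the inductive step, the key observation is that, being constructed by iterated ratios and derivatives of $f_{0,1},\dots,f_{0,m}$, each $f_{q-1,1}$ is meromorphic in $D(0,R)$ with hyper-order satisfying $\sigma_2(f_{q-1,1},0)\le \beta$ (differentiation and ratio of meromorphic functions do not raise the hyper-order, which one reads off from Valiron's decomposition together with Remark \ref{rem1}). Consequently, Corollary \ref{coro2} applied with $n=2$ furnishes a set $E\subset(0,R')$ of finite logarithmic measure outside of which
\begin{equation*}
\left|\frac{f_{q-1,1}^{(\ell)}(z)}{f_{q-1,1}(z)}\right| \le \exp\!\left\{\frac{1}{r^{\beta+\varepsilon/2}}\right\}, \qquad \ell=1,\dots,k-q.
\end{equation*}
Combining this with the inductive hypothesis on $|A_{q-1,i}|$ for $i=j+1,\dots,k-q-1$, and using the monotonicity $\tau_{q-1+i}\le\tau_{q+j}$ for $i\ge j+1$ (since $\tau_p$ is non-increasing in $p$), the recursion (\ref{5.6}) yields (\ref{5.8}) after summing the finitely many terms and absorbing multiplicative constants into the $\varepsilon$ exponent.

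\textbf{Main obstacle.} Stage 1 is computationally routine once the bookkeeping of the Leibniz expansion is carried through. The genuine difficulty is in Stage 2, specifically in justifying that the hyper-order of $f_{q-1,1}$ remains bounded by $\beta$ at every level of the reduction (\ref{5.4}). This is what allows the logarithmic derivative estimate of Corollary \ref{coro2} to be inserted at each inductive step, and it is what ultimately propagates the bound $\max\{\tau_{q+j},\beta\}$ (rather than something worse) through the recursion.
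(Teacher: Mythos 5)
Your proposal is correct and takes essentially the same route as the paper: the paper disposes of your Stage 1 by citing the standard order-reduction lemmas of \cite{4} (Lemmas 6.2 and 6.3) and then proves \eqref{5.8} exactly as you do, by induction on $q$, bounding $\left\vert A_{q-1,i}\right\vert$ via the induction hypothesis and $\left\vert f_{q-1,1}^{(i-j-1)}/f_{q-1,1}\right\vert$ via the logarithmic derivative estimate (Theorem \ref{th1}, equivalently Corollary \ref{coro2} with $n=2$), using the glossed fact that $\sigma_{2}\left( f_{q-1,1},0\right) \leq \beta$. Your justification of that hyper-order bound and your handling of the monotonicity of $\tau_{p}$ and the absorption of constants into $\varepsilon$ match the paper's argument in substance.
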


\begin{proof}
By \cite[Lemma 6.2 and Lemma 6.3]{4}, we obtain (\ref{5.5}) and (\ref{5.6}).
Therefore, we need only to prove (\ref{5.8}). For this proof, we use
induction on $q.$ First suppose that $q=1$. Then, from (\ref{5.6}) we get%
\begin{equation}
A_{1,j}\left( z\right) =\sum\limits_{i=j+1}^{k}\left( 
\begin{array}{c}
i \\ 
j+1%
\end{array}%
\right) A_{0,i}\left( z\right) \frac{f_{0,1}^{\left( i-j-1\right) }\left(
z\right) }{f_{0,1}\left( z\right) },\ j=0,1,...,k-2.  \label{5.10}
\end{equation}%
Since $\sigma _{2}\left( f_{0,j},0\right) \leq \beta ,$ by Theorem \ref{th1}%
, we have%
\begin{equation}
\left\vert \frac{f_{0,1}^{\left( i-j-1\right) }\left( z\right) }{%
f_{0,1}\left( z\right) }\right\vert \leq \exp \left\{ \frac{1}{r^{\beta
+\varepsilon }}\right\} ,\ r=\left\vert z\right\vert \notin E.  \label{5.10b}
\end{equation}%
It follows from (\ref{5.7}) and (\ref{5.10}) that (\ref{5.8}) holds for $q=1$%
. For the induction step, we make the assumption that (\ref{5.8}) holds for $%
q-1$; i.e.%
\begin{equation}
\left\vert A_{q-1,j}\left( z\right) \right\vert \leq \exp \left\{ \frac{1}{%
r^{\max \left\{ \tau _{q-1+j},\beta \right\} +\varepsilon }}\right\} ,\
r\notin E,  \label{5.11}
\end{equation}%
for $j=1,2,...,k-q-1;$ and we show that (\ref{5.8}) holds for $q$. From (\ref%
{5.6}) we get%
\begin{equation}
A_{q,j}\left( z\right) =\sum\limits_{i=j+1}^{k-q-1}\left( 
\begin{array}{c}
i \\ 
j+1%
\end{array}%
\right) A_{q-1,j}\left( z\right) \frac{f_{q-1,1}^{\left( i-j-1\right)
}\left( z\right) }{f_{q-1,1}\left( z\right) }.  \label{5.13}
\end{equation}%
Since $\sigma _{2}\left( f_{0,j},0\right) $ and by elementary order
considerations we get $\sigma _{2}\left( f_{q-1,1},0\right) \leq \beta ,$
and by Theorem \ref{th1}, we obtain%
\begin{equation}
\left\vert \frac{f_{q-1,1}^{\left( i-j-1\right) }\left( z\right) }{%
f_{q-1,1}\left( z\right) }\right\vert \leq \exp \left\{ \frac{1}{r^{\beta
+\varepsilon }}\right\} ,\ r=\left\vert z\right\vert \notin E.  \label{5.13b}
\end{equation}%
From (\ref{5.11})-(\ref{5.13b}), we get%
\begin{equation}
\left\vert A_{q,j}\left( z\right) \right\vert \leq \exp \left\{ \frac{1}{%
r^{\max \left\{ \tau _{q+j},\beta \right\} +\varepsilon }}\right\} ,\
r\notin E.  \label{5.14}
\end{equation}%
This proves the induction step, and therefore completes the proof of Lemma %
\ref{lem11}.
\end{proof}

\begin{lemma}
\label{lem12}Under the assumptions of Lemma \ref{lem11}, we have%
\begin{equation}
A_{q,0}=A_{0,q}+G_{q}\left( z\right) ,  \label{5.19}
\end{equation}%
where $G_{q}\left( z\right) =\sum\limits_{j=2}^{q+1}H_{j}$ with 
\begin{equation}
H_{j}=\sum\limits_{i=j}^{k-q+j-1}\left( 
\begin{array}{c}
i \\ 
j-1%
\end{array}%
\right) A_{q-j+1,i}\left( z\right) \frac{f_{q-j+1,1}^{\left( i-j+1\right)
}\left( z\right) }{f_{q-j+1,1}\left( z\right) }.  \label{5.19b}
\end{equation}%
Moreover, $G_{q}\left( z\right) $ satisfies%
\begin{equation}
\left\vert G_{q}\left( z\right) \right\vert \leq \exp \left\{ \frac{1}{%
r^{\max \left\{ \tau _{q+1},\beta \right\} +\varepsilon }}\right\} ,\
r=\left\vert z\right\vert \notin E.  \label{5.20}
\end{equation}
\end{lemma}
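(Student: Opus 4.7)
The plan is to prove the identity (5.19) by iterating the recurrence (5.6) $q$ times, starting from $A_{q,0}$ and peeling off one index at each step until we reach $A_{0,q}$, then to estimate the ``leftover'' sum $G_q$ via the bounds in Lemma \ref{lem11} and the logarithmic derivative estimate of Theorem \ref{th1}.

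Writing (5.6) with $j=0$ (and reading the upper index $k-q+1$, as is consistent with (5.10) and the convention $A_{i,k-i}\equiv 1$), the $i=1$ term equals $A_{q-1,1}$, so
\begin{equation*}
A_{q,0}=A_{q-1,1}+\sum_{i=2}^{k-q+1}\binom{i}{1}A_{q-1,i}\,\frac{f_{q-1,1}^{(i-1)}}{f_{q-1,1}}.
\end{equation*}
The tail is exactly $H_2$ with the relabelling $j=m+1$. Applying (5.6) again with $p=q-1$, $j=1$ writes $A_{q-1,1}$ as $A_{q-2,2}$ plus a tail which is $H_3$, and so on. After $q$ peels I obtain $A_{q,0}=A_{0,q}+\sum_{m=1}^{q}E_m$, where the $m$-th tail
\begin{equation*}
E_m=\sum_{i=m+1}^{k-q+m}\binom{i}{m}A_{q-m,i}\,\frac{f_{q-m,1}^{(i-m)}}{f_{q-m,1}}
\end{equation*}
coincides with $H_{m+1}$ in the statement. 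A clean way to present this is induction on $q$: the base $q=1$ is (5.10) at $j=0$, and the inductive step replaces $A_{q-1,1}$ by its expansion from (5.6). This proves (5.19).

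For the estimate (5.20), I bound each $H_j=E_{j-1}$ termwise. By Lemma \ref{lem11} applied to $A_{q-m,i}$,
\begin{equation*}
|A_{q-m,i}(z)|\leq \exp\!\Bigl\{\tfrac{1}{r^{\max\{\tau_{(q-m)+i},\beta\}+\varepsilon}}\Bigr\},\quad r\notin E,
\end{equation*}
and since $\sigma_2(f_{q-m,1},0)\leq\beta$ by elementary order considerations, Theorem \ref{th1} gives
\begin{equation*}
\Bigl|\tfrac{f_{q-m,1}^{(i-m)}(z)}{f_{q-m,1}(z)}\Bigr|\leq \exp\!\Bigl\{\tfrac{1}{r^{\beta+\varepsilon}}\Bigr\},\quad r\notin E.
\end{equation*}
The crucial point is the monotonicity $\tau_{p+1}\leq\tau_p$, which is immediate from the definition $\tau_p=\max_{p\leq j\leq k-1}\alpha_j$. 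Since the summation variable satisfies $i\geq m+1$, one has $(q-m)+i\geq q+1$, hence $\tau_{(q-m)+i}\leq \tau_{q+1}$. Therefore every summand in $E_m$ is bounded by $\exp\{1/r^{\max\{\tau_{q+1},\beta\}+2\varepsilon}\}$ outside $E$, and summing the finitely many terms and then over $m=1,\dots,q$ (absorbing the polynomial factor into a slight enlargement of $\varepsilon$) yields (5.20).

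The only subtle point is that Lemma \ref{lem11} and Theorem \ref{th1} each come with their own exceptional set of finite logarithmic measure; one takes their finite union (still of finite logarithmic measure) and calls it $E$. The combinatorial bookkeeping in matching the iterated tails $E_m$ to the stated $H_j$ is the main thing to get right; once the index shift $j=m+1$ is in hand, both the identity and the bound follow cleanly from the monotonicity of $\tau_p$ together with the two quoted estimates.
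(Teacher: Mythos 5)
Your proposal is correct, and for the part of the lemma the paper actually argues it follows the same route: the paper proves only the bound \eqref{5.20}, by applying the triangle inequality to $G_q$, the estimate \eqref{5.8} of Lemma \ref{lem11} to each $\left\vert A_{q-j+1,i}\right\vert$, and Theorem \ref{th1} to the logarithmic derivatives using $\sigma_{2}\left( f_{q-j+1,1},0\right) \leq \beta$ — exactly your termwise argument, except that you make explicit the monotonicity $\tau_{p+1}\leq\tau_{p}$ (so that $\tau_{(q-m)+i}\leq\tau_{q+1}$ for $i\geq m+1$) and the union of the finitely many exceptional sets, both of which the paper leaves implicit. Where you genuinely diverge is the identity \eqref{5.19}–\eqref{5.19b}: the paper does not prove it at all but cites Lemma 6.5 of Gundersen–Steinbart–Wang \cite{4}, whereas you rederive it by iterating the recursion \eqref{5.6} (correctly reading the misprinted $A_{q-1,j}$ and upper limit there as $A_{q-1,i}$ and $k-q+1$, consistently with \eqref{5.10} and the convention $A_{i,k-i}\equiv 1$), peeling off the $i=j+1$ term at each stage so that the tails line up with the $H_{j}$ after the index shift $j=m+1$; this makes the lemma self-contained at the cost of some bookkeeping. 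The only cosmetic point worth adding is that the top term $i=k-q+j-1$ of each $H_{j}$ involves $A_{q-j+1,k-(q-j+1)}\equiv 1$, which is not covered by \eqref{5.8} but is trivially bounded, so the estimate goes through as you state.
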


\begin{proof}
(\ref{5.19}) and (\ref{5.19b}) are the same in \cite[Lemma 6.5]{4}. So, we
need only to prove (\ref{5.20}). We have%
\begin{equation*}
\left\vert G_{q}\left( z\right) \right\vert \leq
\sum\limits_{j=2}^{q+1}\sum\limits_{i=j}^{k-q+j-1}\left( 
\begin{array}{c}
i \\ 
j-1%
\end{array}%
\right) \left\vert A_{q-j+1,i}\left( z\right) \right\vert \left\vert \frac{%
f_{q-j+1,1}^{\left( i-j+1\right) }\left( z\right) }{f_{q-j+1,1}\left(
z\right) }\right\vert .
\end{equation*}

By applying (\ref{5.8}) for the coefficients $\left\vert A_{q-j+1,i}\left(
z\right) \right\vert $ and Theorem \ref{th1} for the logarithmic derivatives 
$\left\vert \frac{f_{q-j+1,1}^{\left( i-j+1\right) }\left( z\right) }{%
f_{q-j+1,1}\left( z\right) }\right\vert $ by taking account that $\sigma
_{2}\left( f_{q-j+1,1},0\right) \leq \beta ,$ we obtain (\ref{5.20}).
\end{proof}

\section{Proof of theorems}

\begin{proof}[Proof of Theorem \protect\ref{th1}]
Suppose that $f$ is meromorphic function in $D\left( 0,R\right) $ with a
singular point at the origin. By Valiron's decomposition lemma we have 
\begin{equation}
f\left( z\right) =z^{m}\phi \left( z\right) \mu \left( z\right)  \label{s1}
\end{equation}%
where\newline
a) The poles and zeros of $f$ in $D\left( 0,R^{\prime }\right) $ are
precisely the poles and zeros of $\phi \left( z\right) .$ The poles and
zeros of $f$ in $D\left( R^{\prime },R\right) $ are precisely the poles and
zeros of $\mu \left( z\right) .$\newline
b) $\phi \left( z\right) $ is meromorphic in $D\left( 0,\infty \right] $\
and analytic and nonzero in $D\left[ R^{\prime },\infty \right] .$\newline
c) $\mu \left( z\right) $ is meromorphic in $D\left( R\right) $ and analytic
and nonzero in $D\left( R^{\prime }\right) .$\newline
Set $\tilde{\phi}\left( z\right) =z^{m}\phi \left( z\right) .$ we have%
\begin{equation*}
\frac{f^{\prime }\left( z\right) }{f\left( z\right) }=\frac{\tilde{\phi}%
^{\prime }\left( z\right) }{\tilde{\phi}\left( z\right) }+\frac{\mu ^{\prime
}\left( z\right) }{\mu \left( z\right) };
\end{equation*}%
and thus%
\begin{equation}
\left\vert \frac{f^{\prime }\left( z\right) }{f\left( z\right) }\right\vert
\leq \left\vert \frac{\tilde{\phi}^{\prime }\left( z\right) }{\tilde{\phi}%
\left( z\right) }\right\vert +\left\vert \frac{\mu ^{\prime }\left( z\right) 
}{\mu \left( z\right) }\right\vert .  \label{l5a}
\end{equation}%
Since $\mu \left( z\right) $ is analytic and non zero in $D\left( R^{\prime
}\right) ,$ we have%
\begin{equation}
\left\vert \frac{\mu ^{\left( j\right) }\left( z\right) }{\mu \left(
z\right) }\right\vert \leq M,\ \left( j\in 
\mathbb{N}
\right) .  \label{l5b}
\end{equation}%
Set $g\left( w\right) =\tilde{\phi}\left( \frac{1}{w}\right) $. Since $\phi
\left( z\right) $ satisfy b), $g\left( w\right) $ is meromorphic in $%
\mathbb{C}
$. We have $\tilde{\phi}\left( z\right) =g\left( w\right) $ such that $w=%
\frac{1}{z};$ then $\tilde{\phi}^{\prime }\left( z\right) =\frac{-1}{z^{2}}%
g^{\prime }\left( w\right) $ and then 
\begin{equation}
\frac{\tilde{\phi}^{\prime }\left( z\right) }{\tilde{\phi}\left( z\right) }=%
\frac{-1}{z^{2}}\frac{g^{\prime }\left( w\right) }{g\left( w\right) }.
\label{l5}
\end{equation}%
By Lemma \ref{lem1}, there exists a set $E_{1}\subset \left( 1,\infty
\right) $ that has a finite logarithmic measure such that for all $%
\left\vert w\right\vert =\frac{1}{\left\vert z\right\vert }=\frac{1}{r}$
satisfying $\frac{1}{r}\notin \left[ 0,1\right) \cup E_{1}$, we have%
\begin{equation*}
\left\vert \frac{g^{\prime }\left( w\right) }{g\left( w\right) }\right\vert
\leq C\left[ T\left( \frac{\alpha }{r},g\right) r\log ^{\alpha }\left( \frac{%
1}{r}\right) \log T\left( \frac{\alpha }{r},g\right) \right] ,\ \frac{1}{r}%
\notin E_{1},
\end{equation*}%
and by Lemma \ref{lem2} and (\ref{l5}),\ we get%
\begin{equation}
\left\vert \frac{\tilde{\phi}^{\prime }\left( z\right) }{\tilde{\phi}\left(
z\right) }\right\vert \leq C\left[ \frac{1}{r}T_{0}\left( \frac{r}{\alpha },%
\tilde{\phi}\right) \log ^{\alpha }\left( \frac{1}{r}\right) \log
T_{0}\left( \frac{r}{\alpha },\tilde{\phi}\right) \right] ,\ r\notin
E_{1}^{\ast };  \label{l5c}
\end{equation}%
where $\frac{1}{r}=R\notin E_{1}\Leftrightarrow r\notin E_{1}^{\ast }$ and $%
\int\limits_{0}^{r_{0}}\frac{\chi _{E_{1}^{\ast }}}{t}dt=\int%
\limits_{1/r_{0}}^{\infty }\frac{\chi _{E_{1}}}{T}dT<\infty ,$ (the constant 
$C>0$ is not the same at each occurrence). Combining (\ref{l5a})-(\ref{l5b})
with (\ref{l5c}) and by taking account Remark \ref{rem1}, we get%
\begin{equation*}
\left\vert \frac{f^{\prime }\left( z\right) }{f\left( z\right) }\right\vert
\leq C\left[ \frac{1}{r}T_{0}\left( \frac{r}{\alpha },f\right) \log ^{\alpha
}\left( \frac{1}{r}\right) \log T_{0}\left( \frac{r}{\alpha },f\right) %
\right] ,\ r\notin E_{1}^{\ast }.
\end{equation*}

We have $\tilde{\phi}^{\prime \prime }\left( z\right) =\frac{1}{z^{4}}%
g^{\prime \prime }\left( w\right) +\frac{2}{z^{3}}g^{\prime }\left( w\right) 
$; and so%
\begin{equation*}
\frac{\tilde{\phi}^{\prime \prime }\left( z\right) }{\tilde{\phi}\left(
z\right) }=\frac{1}{z^{4}}\frac{g^{\prime \prime }\left( w\right) }{g\left(
w\right) }+\frac{2}{z^{3}}\frac{g^{\prime }\left( w\right) }{g\left(
w\right) }.
\end{equation*}%
and by Lemma \ref{lem1} and Lemma \ref{lem2},\ we obtain%
\begin{equation}
\left\vert \frac{\tilde{\phi}^{\prime \prime }\left( z\right) }{\tilde{\phi}%
\left( z\right) }\right\vert \leq C\left[ \frac{1}{r}T_{0}\left( \frac{r}{%
\alpha },\tilde{\phi}\right) \log ^{\alpha }\left( \frac{1}{r}\right) \log
T_{0}\left( \frac{r}{\alpha },\tilde{\phi}\right) \right] ^{2}\ r\notin
E_{1}^{\ast }.  \label{ll1}
\end{equation}%
We have%
\begin{equation}
\frac{f^{\prime \prime }\left( z\right) }{f\left( z\right) }=\frac{\tilde{%
\phi}^{\prime \prime }\left( z\right) }{\tilde{\phi}\left( z\right) }+\frac{%
\mu ^{\prime \prime }\left( z\right) }{\mu \left( z\right) }+2\frac{\tilde{%
\phi}^{\prime }\left( z\right) }{\tilde{\phi}\left( z\right) }\frac{\mu
^{\prime }\left( z\right) }{\mu \left( z\right) }.  \label{l5d}
\end{equation}%
Combining (\ref{ll1})-(\ref{l5d}) with (\ref{l5b}) and by Remark \ref{rem1},
we get%
\begin{equation*}
\left\vert \frac{f^{\prime \prime }\left( z\right) }{f\left( z\right) }%
\right\vert \leq C\left[ \frac{1}{r}T_{0}\left( \frac{r}{\alpha },f\right)
\log ^{\alpha }\left( \frac{1}{r}\right) \log T_{0}\left( \frac{r}{\alpha }%
,f\right) \right] ^{2},\ r\notin E_{1}^{\ast }.
\end{equation*}%
In general, we can find that 
\begin{equation*}
\tilde{\phi}^{\left( k\right) }\left( z\right) =\frac{1}{z^{2k}}g^{\left(
k\right) }\left( w\right) +\frac{a_{k-1}}{z^{2k-1}}g^{\left( k-1\right)
}\left( w\right) +...+\frac{a_{1}}{z^{k+1}}g^{\prime }\left( w\right) ;
\end{equation*}%
where $a_{1},...,a_{k-1}$ are integers; thus%
\begin{equation}
\frac{\tilde{\phi}^{\left( k\right) }\left( z\right) }{\tilde{\phi}\left(
z\right) }=\frac{1}{z^{2k}}\frac{g^{\left( k\right) }\left( w\right) }{%
g\left( w\right) }+\frac{a_{k-1}}{z^{2k-1}}\frac{g^{\left( k-1\right)
}\left( w\right) }{g\left( w\right) }+...+\frac{a_{1}}{z^{k+1}}\frac{%
g^{\prime }\left( w\right) }{g\left( w\right) }.  \label{l6}
\end{equation}%
Also by making use Lemma \ref{lem1} and Lemma \ref{lem2} with (\ref{l6}), we
get,\ for $r=\left\vert z\right\vert <r_{0}$, 
\begin{equation}
\left\vert \frac{\tilde{\phi}^{\left( k\right) }\left( z\right) }{\tilde{\phi%
}\left( z\right) }\right\vert \leq C\left[ \frac{1}{r}T_{0}\left( \frac{r}{%
\alpha },\tilde{\phi}\right) \log ^{\alpha }\left( \frac{1}{r}\right) \log
T_{0}\left( \frac{r}{\alpha },\tilde{\phi}\right) \right] ^{k}\ r\notin
E_{1}^{\ast }.  \label{l6a}
\end{equation}%
We can generalize the equality of $\frac{f^{\left( k\right) }\left( z\right) 
}{f\left( z\right) }$ as follows%
\begin{equation}
\frac{f^{\left( k\right) }\left( z\right) }{f\left( z\right) }%
=\sum\limits_{j=0}^{j=k}\left( 
\begin{array}{c}
j \\ 
k%
\end{array}%
\right)\frac{\tilde{\phi}^{\left( k-j\right)
}\left( z\right) }{\tilde{\phi}\left( z\right) }\frac{\mu ^{\left( j\right)
}\left( z\right) }{\mu \left( z\right) },  \label{ll2}
\end{equation}%
where $\left( 
\begin{array}{c}
j \\ 
k%
\end{array}%
\right)=\frac{k!}{j!\left( k-j\right) !}$ is the binomial
coefficient. Combining (\ref{l6a})-(\ref{ll2}), with (\ref{l5b}) and Remark %
\ref{rem1}, we obtain%
\begin{equation*}
\left\vert \frac{f^{\left( k\right) }\left( z\right) }{f\left( z\right) }%
\right\vert \leq C\left[ \frac{1}{r}T_{0}\left( \frac{r}{\alpha },f\right)
\log ^{\alpha }\left( \frac{1}{r}\right) \log T_{0}\left( \frac{r}{\alpha }%
,f\right) \right] ^{k}\ \left( k\in 
\mathbb{N}
\right) ,
\end{equation*}%
The same reasoning for the case (ii); noting that $\theta \in
E_{2}\Leftrightarrow 2\pi -\theta \in E_{2}^{\ast };\ $so, if $E_{2}\subset %
\left[ 0,2\pi \right) $ has linear measure zero, then $E_{2}^{\ast }\subset %
\left[ 0,2\pi \right) $ has also linear measure zero.
\end{proof}

\begin{proof}[Proof of Theorem \protect\ref{th2b}]
We divide the proof into three parts:\newline
1) If $\sigma _{n}\left( A_{j},0\right) \leq \alpha $ for all $j=0,1,\dots
,k-1,$ then by Lemma \ref{lem7} all solutions $f$ of (\ref{eq3}) satisfy $%
\sigma _{n+1}(f,0)\leq \alpha .$\newline
2) Suppose that $\sigma _{n}\left( A_{j},0\right) =\alpha _{j},$ and let $%
q\in \left\{ 0,1,...,k-1\right\} $ be the largest index such that $\alpha
_{q}=\underset{0\leq j\leq k-1}{\max }\left\{ \alpha _{j}\right\} .$ By Part
1) all solutions $f$ of (\ref{eq3}) satisfy $\sigma _{n+1}(f,0)\leq \alpha
_{q}.$ Assume that there are $q+1$ linearly independent solutions $%
f_{0,1},f_{0,2},...,f_{0,q+1}$ of (\ref{eq3}) satisfy $\sigma
_{n+1}(f_{0,j},0)<\alpha _{q}$ for all $j=1,\dots ,q+1.$ By Lemma \ref{lem11}
with $m=q+1$, there exists a solution $f_{q,1}\not\equiv 0$ of (\ref{5.5})
such that $\sigma _{n+1}(f_{q,1})<\alpha _{q}$ and for any $\varepsilon >0$%
\begin{equation}
\left\vert A_{q,j}\left( z\right) \right\vert \leq \exp _{n}\left\{ \frac{1}{%
r^{\max \left\{ \tau _{q+j},\beta \right\} +\varepsilon }}\right\} ,\
r\notin E.  \label{11}
\end{equation}%
where $\tau _{q+j}=\underset{q+j\leq l\leq k-1}{\max }\left\{ \alpha
_{l}\right\} $ and $j=1,\dots ,k-q-1.$ We have $\max \left\{ \tau
_{q+j},\beta \right\} <\alpha _{q},$ and then%
\begin{equation}
\left\vert A_{q,j}\left( z\right) \right\vert \leq \exp _{n}\left\{ \frac{1}{%
r^{\alpha _{q}-2\varepsilon }}\right\} ,\ r\notin E,  \label{12}
\end{equation}%
for all $j=1,\dots ,k-q-1$ and for $\varepsilon >0$ small enough. Now, by
Lemmas \ref{12}, $\sigma _{n}(A_{q,0},0)=\sigma _{n}(A_{0,q},0)=\alpha _{q}$
and by Lemma \ref{lem6}, there exists a set $F\subset \left( 0,R^{\prime
}\right) $ of infinite logarithmic measure such that for all $r\in F$ we have%
\begin{equation}
\left\vert A_{q,0}\left( z\right) \right\vert \geq \exp _{n}\left\{ \frac{1}{%
r^{\alpha _{q}-\varepsilon }}\right\} ,  \label{13}
\end{equation}%
where $\left\vert A_{q,j}\left( z\right) \right\vert =M_{0}\left(
r,A_{q,j}\right) .$ On the other hand, by (\ref{5.5})%
\begin{equation*}
\left\vert A_{q,0}\left( z\right) \right\vert \leq |\frac{f_{q,1}^{(k-q)}}{%
f_{q,1}}|+|A_{q,k-q-1}(z)||\frac{f_{q,1}^{(k-q-1)}}{f_{q,1}}|+\dots
+|A_{q,1}(z)||\frac{f_{q,1}^{\prime }}{f_{q,1}}|,
\end{equation*}%
and so by (\ref{12}) and Corollary \ref{coro2} with $\sigma
_{n+1}(f_{q,1})<\alpha _{q},$ we get%
\begin{equation}
\left\vert A_{q,0}\left( z\right) \right\vert \leq \exp _{n}\left\{ \frac{1}{%
r^{\alpha _{q}-2\varepsilon }}\right\} ,\ r\notin E.  \label{14}
\end{equation}%
By taking $r\in F\backslash E,$ (\ref{14}) contradicts (\ref{13}). Hence,
there are at most $q$ linearly independent solutions $f$ of (\ref{eq3}) such
that $\sigma _{n+1}(f)<\alpha _{q}.$ Since $\sigma _{n+1}(f)\leq \alpha _{q}$
for all solutions $f$ of (\ref{eq3}), there are at least $k-q$ linearly
independent solutions $f$ of (\ref{eq3}) such that $\sigma _{n+1}\left(
f,0\right) =\alpha _{q}.$\newline
3) Suppose that all solutions $f$ of (\ref{eq3}) satisfy $\sigma
_{n+1}(f,0)\leq \alpha ,$ and assume that there is a coefficient $%
A_{j}\left( z\right) $ of (\ref{eq3}) such that $\sigma _{n}(A_{j})>\alpha .$
If $q\in \left\{ 0,1,...,k-1\right\} $ is the largest index such that $%
\alpha _{q}=\underset{0\leq j\leq k-1}{\max }\left\{ \alpha _{j}\right\} ,$
then by part 2), (\ref{eq3}) has at least $k-q$ linearly independent
solutions $f$ such that $\sigma _{n+1}\left( f,0\right) =\alpha _{q}>\alpha
. $ A contradiction. So, $\sigma _{n}(A_{j})\leq \alpha $ for all $%
j=0,1,\dots ,k-1.$
\end{proof}

\begin{proof}[Proof of Theorem \protect\ref{th2}]
From \eqref{eq3}, we can write 
\begin{equation}
|A_{0}(z)|\leq |\frac{f^{(k)}}{f}|+|A_{k-1}(z)||\frac{f^{(k-1)}}{f}|+\dots
+|A_{1}(z)||\frac{f^{\prime }}{f}|.  \label{p7}
\end{equation}%
\textbf{Case (i):} $\sigma \left( A_{j},0\right) <\sigma \left(
A_{0},0\right) <\infty \ \left( j=1,...,k-1\right) .$ Set $\max \{\sigma
(A_{j},0):j\neq 0\}<\beta <\alpha <\sigma (A_{0},0)$. By (\ref{d5}), there
exists $r_{0}>0$ such that for all $r$ satisfying $r_{0}\geq r>0$, we have 
\begin{equation}
|A_{j}(z)|\leq \exp \{\frac{1}{r^{\beta }}\},\quad j=1,2,\dots ,k-1.
\label{p8}
\end{equation}%
By Lemma \ref{lem5}, there exists a set $F\subset \left( 0,R^{\prime
}\right) $ of infinite logarithmic measure such that for all $r\in F$, we
have 
\begin{equation}
|A_{0}(z)|>\exp \{\frac{1}{r^{\alpha }}\},  \label{p9}
\end{equation}%
where $|A_{0}(z)|=M_{0}\left( r,A_{0}\right) .$ From Theorem \ref{th1},
there exists a set $E_{1}^{\ast }\subset \left( 0,R^{\prime }\right) $ that
has finite logarithmic measure and a constant $C>0$ such that for all $%
r=\left\vert z\right\vert $ satisfying $r\in \left( 0,R^{\prime }\right)
\backslash E_{1}^{\ast }$, we have%
\begin{equation}
\left\vert \frac{f^{\left( j\right) }\left( z\right) }{f\left( z\right) }%
\right\vert \leq \frac{C}{r^{2k}}\left[ T_{0}\left( \frac{r}{\alpha }%
,f\right) \right] ^{2k}\ \left( j=1,...,k-1\right) .  \label{p9b}
\end{equation}%
Using \ref{p8}--\ref{p9b} in \ref{p7}, for $r\in F\backslash E_{1}^{\ast },$
we obtain 
\begin{equation}
\exp \{\frac{1}{r^{\alpha }}\}\leq \frac{C}{r^{2k}}\left[ T_{0}\left( \frac{r%
}{\alpha },f\right) \right] ^{2k}\exp \{\frac{1}{r^{\beta +\varepsilon }}\}.
\label{p10}
\end{equation}%
From \eqref{p10}, we obtain that $\sigma _{2}(f,0)\geq \alpha $.

On the other hand, applying Lemma \ref{lem7} with \eqref{eq3}, we obtain
that $\sigma _{2}(f,0)\leq \sigma (A_{0},0)$. Since $\alpha \leq \sigma
_{2}(f,0)\leq \sigma (A_{0},0)$ holds for all $\alpha <\sigma (A_{0},0)$,
then $\sigma _{2}(f,0)=\sigma (A_{0},0)$.

\textbf{Case (ii):} $0<\sigma \left( A_{j},0\right) \leq \sigma \left(
A_{0},0\right) <\infty $ and $\max \left\{ \tau _{M}\left( A_{j},0\right)
:\sigma \left( A_{j},0\right) =\sigma \left( A_{0},0\right) \right\} <\tau
_{M}\left( A_{0},0\right) \ \left( j=1,...,k-1\right) .$ Set $\max \left\{
\tau _{M}\left( A_{j},0\right) :\sigma \left( A_{j},0\right) =\sigma \left(
A_{0},0\right) \right\} <\mu <\nu <\tau _{M}\left( A_{0},0\right) $ and $%
\sigma (A_{0},0)=\sigma .$ By (\ref{d5b}), there exists $r_{0}>0$ such that
for all $r$ satisfying $r_{0}\geq r>0$, we have%
\begin{equation}
|A_{j}(z)|\leq \exp \{\frac{\mu }{r^{\sigma }}\},\quad j=1,2,\dots ,k-1.
\label{p11}
\end{equation}%
By Lemma \ref{lem5}, there exists a set $F\subset \left( 0,R^{\prime
}\right) $ of infinite logarithmic measure such that for all $r\in F$ and $%
|A_{0}(z)|=M_{0}\left( r,A_{0}\right) ,$ we have%
\begin{equation}
|A_{0}(z)|>\exp \{\frac{\nu }{r^{\sigma }}\}.  \label{p12}
\end{equation}%
Combining (\ref{p11})-(\ref{p12}) with (\ref{p9b}) and (\ref{p7}), we get
for $r\in F\backslash E_{1}^{\ast },$%
\begin{equation}
\exp \{\frac{\nu }{r^{\sigma }}\}\leq \frac{C}{r^{2k}}\left[ T_{0}\left( 
\frac{r}{\alpha },f\right) \right] ^{2k}\exp \{\frac{\mu }{r^{\sigma }}\}.
\label{p13}
\end{equation}%
From (\ref{p13}), we get $\sigma _{2}(f,0)\geq \sigma ,$ and combining this
with Lemma \ref{lem7}, we obtain that $\sigma _{2}(f,0)=\sigma (A_{0},0)$.
\end{proof}

\begin{proof}[Proof of Theorem \protect\ref{th3}]
We begin with the case $a=cb$ $\left( 0<c<1\right) .$ It is easy to see that 
$\tau _{M}\left( A\left( z\right) \exp \left\{ \frac{a}{z^{n}}\right\}
,0\right) =\left\vert a\right\vert $ and $\tau _{M}\left( B\left( z\right)
\exp \left\{ \frac{b}{z^{n}}\right\} ,0\right) =\left\vert b\right\vert .$
By Theorem \ref{th2} case (ii), we get $\sigma _{2}(f,0)=n.$ Now, suppose
that $\arg a\neq \arg b.$ Then, there exist $(\varphi _{1},\varphi
_{2})\subset \lbrack 0,2\pi )$ such that for $\arg (z)=\varphi \in (\varphi
_{1},\varphi _{2}),$ we have $\delta _{b}(\varphi )>0$ and $\delta
_{a}(\varphi )<0$. From \eqref{eq4}, we can write%
\begin{equation}
|B(z)\exp \left\{ \frac{b}{z^{n}}\right\} |\leq |\frac{f^{\prime \prime }}{f}%
|+|A(z)\exp \left\{ \frac{a}{z^{n}}\right\} ||\frac{f^{\prime }}{f}|.
\label{p1}
\end{equation}%
Since $\max \{\sigma (A,0),\sigma (B,0)\}<n$, then by Lemma \ref{lem8}, %
\eqref{l3} and \eqref{p1}, we obtain%
\begin{equation}
\exp \left\{ \left( 1-\varepsilon \right) \delta _{b}\left( \varphi \right) 
\frac{1}{r^{n}}\right\} \leq \frac{C}{r^{4}}\left[ T_{0}\left( \frac{r}{%
\alpha },f\right) \right] ^{4}\exp \left\{ \left( 1-\varepsilon \right)
\delta _{a}\left( \varphi \right) \frac{1}{r^{n}}\right\} .  \label{p14}
\end{equation}%
From (\ref{p14}) we get $\sigma _{2}(f,0)\geq n$ and combining this with
Lemma \ref{lem7}, we obtain that $\sigma _{2}(f,0)=n$.
\end{proof}

\end{document}